\documentclass[a4paper,10pt,reqno]{amsart}
\newtheorem{theorem}{Theorem}[section]

\newtheorem{maintheorem}{Theorem}
\newtheorem{secondtheorem}[maintheorem]{Theorem}

\newtheorem{proposition}[theorem]{Proposition}

\newtheorem{lemma}[theorem]{Lemma}
\theoremstyle{definition}
\newtheorem{definition}[theorem]{Definition}

\newtheorem{remark}[theorem]{Remark}

\newcommand{\sing}{\textsf{Sing}}
\newcommand{\dm}{\textsf{dim}}

\numberwithin{equation}{section}

\newcommand{\Diffeo}{\operatorname{Diff}}

\begin{document}
\title[Levi-flat hypersurfaces with an isolated line singularity]{On Normal forms for Levi-flat hypersurfaces with an isolated line singularity}
\date{\today}
\author{Arturo Fern\' andez P\' erez}
\address{Departamento de Matem\'atica, Universidade Federal de Minas Gerais, UFMG}
\curraddr{Av. Ant\^onio Carlos, 6627 C.P. 702, 30123-970 - Belo Horizonte - MG, Brazil.}
\email{arturofp@mat.ufmg.br}
\subjclass[2010]{Primary 32V40 - 32S65}
\keywords{Levi-flat hypersurfaces - Holomorphic foliations}
\begin{abstract}
We prove the existence of normal forms for some local real-analytic Levi-flat hypersurfaces with an isolated line singularity. We also give sufficient conditions for that a Levi-flat hypersurface with a complex line as singularity to be a pullback of a real-analytic curve in $\mathbb{C}$ via a holomorphic function.
\end{abstract}
\maketitle
\section{Introduction}
\par Let $M\subset U\subset\mathbb{C}^{n}$ be a real-analytic hypersurface, where $U$ is an open set and denote by  $M^{*}$ the regular part, that is, near each point $p\in M^{*}$, the variety $M$ is a manifold of real codimension one. For each  $p\in M^{*}$, there is a unique complex hyperplane $L_{p}$ contained in the tangent space $T_{p}M^{*}$, and consequently defines a real-analytic distribution $p\mapsto L_{p}$ of complex hyperplanes in $T_{p}M^{*}$, the so-called \textit{Levi distribution}. We say that $M$ is \textit{Levi-flat}, if the Levi distribution is integrable in sense of Frobenius.  The foliation defined by this distribution is called \textit{Levi-foliation}. The local structure near regular points is very well  understood, according to E. Cartan, around each $p\in M^{*}$ we can find local holomorphic coordinates $z_{1},\ldots,z_{n}$ such that $M^{*}=\{\mathcal{R}e(z_{n})=0\}$, and consequently the leaves of Levi-foliation are 
imaginary levels of $z_{n}$. The singular case was studied by Burns-Gong \cite{burns}, The authors classified singular Levi-flat hypersurfaces in $\mathbb{C}^{n}$ with quadratic singularities and also 
proved the existence of a normal form, in the case of generic (Morse) singularities. In \cite{alcides}, Cerveau-Lins Neto have proved that a local real-analytic Levi-flat hypersurface $M$ with a sufficiently small singular set is 
given by the zeros of the real part of a holomorphic function. 
\par The aim of this paper is to prove
 the existence of some normal forms for local real-analytic Levi-flat hypersurfaces defined by the vanishing of real part of holomorphic functions with an \textit{isolated line singularity} (for short: ILS). In particular, we establish an analogous result like in Singularity Theory for germs of holomorphic functions. 
  \par The main motivation for this work is a result due to Dirk Siersma, who introduced in \cite{siersma} the class of germs of holomorphic functions with an ILS. More precisely, let $\mathcal{O}_{n+1}:=\{f:(\mathbb{C}^{n+1},0)\rightarrow\mathbb{C}\}$ be the ring of germs of holomorphic functions and let $m$ be its maximal ideal. If $(x,y)=(x,y_{1},\ldots,y_{n})$ denote the coordinates in $\mathbb{C}^{n+1}$ and consider the line $L:=\{y_1=\ldots=y_n=0\}$, let $I:=(y_{1},\ldots,y_n)\subset\mathcal{O}_{n+1}$ be its ideal and denote by $\mathcal{D}_{I}$ the group of local analytic isomorphisms $\varphi:(\mathbb{C}^{n+1},0)\rightarrow(\mathbb{C}^{n+1},0)$ for which $\varphi(L)=L$. Then $\mathcal{D}_{I}$ acts on $I^2$ and for $f\in I^2$, the tangent space of (the orbit of) $f$ with respect to this action is the ideal defined by $$\tau(f):=m.\frac{\partial{f}}{\partial{x}}+I.\frac{\partial{f}}{\partial{y}}$$
and the codimension of (the orbit) of $f$ is $$c(f):=\dim_{\mathbb{C}}\frac{I^2}{\tau(f)}.$$
\par A line singularity is a germ $f\in I^2$. An ILS is a line singularity $f$ such that $c(f)<\infty$. Geometrically, $f\in I^2$ is an ILS if and only if the singular locus of $f$ is $L$ and for every $x\neq 0$, the germ of (a representative of) $f$ at $(x,0)\in L$ is equivalent to $y^{2}_{1}+\ldots+y^2_{n}$. In a certain sense ILS are the first generalization of isolated singularities. D. Siersma proved the following result. (The topology on $\mathcal{O}_{n+1}$ is introduced as in \cite[p. 145]{durfe}).
\begin{theorem}\label{teo_uno}
A germ $f\in I^2$ is $D_{I}$-simple (i.e. $c(f)<\infty$ and $f$ has a neighborhood in $I^2$ which intersects only a finite number of $D_{I}$-orbits) if and only if $f$ is $D_{I}$-equivalent to one the germs in the following table

$$\begin{tabular}{|c|l|r|}\hline
Type & Normal form & Conditions \\
\hline
$A_{\infty}$ & $y_{1}^{2}+y_{2}^{2}+\ldots +y^{2}_{n}$ & \\
\hline
$D_{\infty}$ & $xy^{2}_{1}+y^{2}_{2}+\ldots+y^{2}_{n}$ & \\
\hline
$J_{k,\infty}$ & $x^{k}y^{2}_{1}+y_{1}^{3}+y^{2}_{2}+\ldots+y_{n}^{2}$ & $k\geq 2$ \\
\hline
$T_{\infty,k,2}$ & $x^{2}y^{2}_{1}+y_{1}^{k}+y_{2}^{2}+\ldots+y_{n}^{2}$ & $k\geq 4$ \\
\hline
$Z_{k,\infty}$ & $xy_{1}^{3}+x^{k+2}y_{1}^{2}+y_{2}^{2}+\ldots+y_{n}^{2}$ & $k\geq 1$ \\
\hline
$W_{1,\infty}$ & $x^{3}y_{1}^{2}+y_{1}^{4}+y_{2}^{2}+\ldots+y_{n}^{2}$ &  \\
\hline
$T_{\infty,q,r}$ & $xy_{1}y_{2}+y_{1}^{q}+y_{2}^{r}+y^{2}_{3}\ldots+y_{n}^{2}$ & $q\geq r\geq 3$ \\
\hline
$Q_{k,\infty}$ & $x^{k}y_{1}^{2}+y_{1}^{3}+xy_{2}^{2}+y^{2}_{3}\ldots+y_{n}^{2}$ & $k\geq 2$ \\
\hline
$S_{1,\infty}$ & $x^{2}y_{1}^{2}+y_{1}^{2}y_{2}+y_{3}^{2}+\ldots+y_{n}^{2}$ &  \\

\hline
\end{tabular}$$
\begin{center}
Table 1. Isolated Line singularities 
\end{center}
\end{theorem}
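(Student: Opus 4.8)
The plan is to run the standard machinery of singularity theory, now for the geometric subgroup $\mathcal{D}_I\subset\mathcal{R}$ of coordinate changes preserving the line $L$, in close parallel with Arnold's treatment of the simple hypersurface singularities. The first step is a \emph{finite determinacy} theorem for this action. Since $\tau(f)=m\,\partial f/\partial x + I\,\partial f/\partial y$ is exactly the tangent space to the $\mathcal{D}_I$-orbit of $f$, the hypothesis $c(f)=\dim_{\mathbb{C}} I^2/\tau(f)<\infty$ yields, by a Nakayama argument, an integer $k$ with $m^k I^2\subset\tau(f)$. A Thom--Levine-type homotopy argument, integrating the $\mathcal{D}_I$-vector fields that generate $\tau(f)$ into a path of isomorphisms fixing $L$, then shows that $f$ is $k$-$\mathcal{D}_I$-determined. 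This replaces the classification of germs by the tractable problem of classifying finite jets, i.e.\ polynomials in $(x,y)$.

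Next I would apply the \emph{splitting lemma} in the variables $y$. Write the $2$-jet of $f\in I^2$ as a quadratic form $Q(y)$ whose coefficients lie in the local ring $\mathcal{O}_1=\mathbb{C}\{x\}$. Because the transformations diagonalizing such a form are linear in $y$ with coefficients in $\mathbb{C}\{x\}$, they preserve $L=\{y=0\}$ and hence belong to $\mathcal{D}_I$; a parametrized Morse lemma then splits off the nondegenerate part as $y_{\kappa+1}^2+\cdots+y_n^2$, leaving a residual germ in $x,y_1,\dots,y_\kappa$, where $\kappa$ is the \emph{corank}. The classification is then organized by $\kappa$: the value $\kappa=0$ gives $A_\infty$ at once; $\kappa=1$ yields the one-variable families $D_\infty$, $J_{k,\infty}$, $T_{\infty,k,2}$, $Z_{k,\infty}$ and $W_{1,\infty}$; and $\kappa=2$ yields $T_{\infty,q,r}$, $Q_{k,\infty}$ and $S_{1,\infty}$.

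The core of the proof is the inductive \emph{complete transversal} reduction within each corank stratum. Having fixed the lower-order terms of the residual germ, at each degree $d$ I would compute a complete transversal, namely a finite-dimensional complement to the degree-$d$ part of $\tau(f)$ inside the degree-$d$ part of $I^2$, and use the unipotent part of the $\mathcal{D}_I$-action to kill every new term except those that are genuinely obstructed. Carrying this out degree by degree produces the explicit normal forms; the discrete conditions recorded in the table (such as $k\geq 2$ or $q\geq r\geq 3$) are precisely the exponents that survive this reduction and cannot be further normalized.

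Finally, the equivalence in the theorem requires matching \emph{simplicity} with the list, and here I expect the main obstacle. For each tabulated germ one must verify \emph{modality zero}, that a neighbourhood in $I^2$ meets only finitely many $\mathcal{D}_I$-orbits, which amounts to analyzing the \emph{adjacencies} of the germ and checking that none of its small deformations carries a continuous modulus. The delicate part is proving that the list is simultaneously complete and sharp: every $\mathcal{D}_I$-simple ILS must reduce to a tabulated form, while the first families omitted from the table (the unimodal line singularities bordering this list) must be shown genuinely to possess moduli, so that no simple germ is missed. Organizing the $\mathcal{D}_I$-orbit structure on jet spaces finely enough to draw this boundary, together with the colength computations certifying $c(f)<\infty$ exactly on the listed germs, is where the real difficulty lies.
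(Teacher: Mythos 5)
There is no ``paper's own proof'' to compare against here: the statement you were asked to prove is Theorem \ref{teo_uno}, which the paper imports verbatim from D.~Siersma's work (the paper says ``D. Siersma proved the following result'' and cites \cite{siersma}); it is background, not a result established in the paper. So your proposal must be judged on its own as an attempted proof of Siersma's classification, and judged that way it has a genuine gap: it is a strategy outline, not a proof. The machinery you invoke (Nakayama plus a Thom--Levine homotopy to get finite $\mathcal{D}_I$-determinacy, a parametrized splitting lemma over $\mathbb{C}\{x\}$, complete transversals, adjacency analysis) is indeed the right toolkit, and it is essentially the toolkit Siersma uses. But the theorem \emph{is} the classification, and the classification is never carried out. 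You never execute a single complete-transversal computation, so none of the normal forms in Table~1 is actually derived, and none of the side conditions ($k\geq 2$, $q\geq r\geq 3$, etc.) is actually obtained; you assert that ``carrying this out degree by degree produces the explicit normal forms,'' which is precisely the content being claimed.

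Both implications of the ``if and only if'' are left unproved. For the listed germs to be $\mathcal{D}_I$-simple you must verify $c(f)<\infty$ for each (a finite but nontrivial colength computation) and show each has a neighborhood meeting only finitely many orbits, which requires the adjacency diagrams you only allude to. For completeness---every simple germ reduces to a tabulated one---you must (i) dispose of corank $\kappa\geq 3$ in the $y$-variables, (ii) bound the admissible jets in coranks $1$ and $2$, and (iii) exhibit the bordering families (the line-singularity analogues of the parabolic/unimodal singularities) and prove they genuinely carry moduli, so that anything not reducing to the table is adjacent to a modulus and hence non-simple. You explicitly flag items like (iii) as ``where the real difficulty lies'' and ``the main obstacle''---that is an accurate self-assessment, and it means the proposal defers exactly the steps that constitute the theorem. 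As a blind reconstruction of the correct proof architecture it is good; as a proof it is incomplete at its core.
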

\par The singularities in Theorem \ref{teo_uno} are analogous of the $A$-$D$-$E$ singularities due to Arnold \cite{singulararnold}. A new characterization of simple ILS have been proved by A. Zaharia \cite{zaharia}. 
We prove the existence of normal forms for Levi-flat hypersurfaces with an ILS.
\begin{maintheorem}\label{theo-uno}
Let $M=\{F=0\}$ be a germ of an irreducible real-analytic hypersurface on $(\mathbb{C}^{n+1},0)$, $n\geq 3$. Suppose that  
\begin{enumerate}
\item $F(x,y)=\mathcal{R}e(P(x,y))+H(x,y),$ where $P(x,y)$ is one of the germs of the Table 1.
 \item $M=\{F=0\}$ is Levi-flat.
  \item $H(x,0)=0$ for all $x\in(\mathbb{C},0)$ and $j_{0}^{k}(H)=0$, for $k=\deg(P)$.
\end{enumerate}
 Then there exists a biholomorphism $\varphi:(\mathbb{C}^{n+1},0)\rightarrow(\mathbb{C}^{n+1},0)$ preserving $L$ such that
$$\varphi(M)=\{\mathcal{R}e(P(x,y))=0\}.$$
\end{maintheorem}
\par This result is a Siersma's type Theorem for singular Levi-flat hypersurfaces. We remark that the function $H$ is of course restricted by the assumption that $M$ is Levi flat. Now, if $\varphi(M)=\{\mathcal{R}e(P(x,y))=0\}$, where $P$ is a germ with an ILS at $L$ then $\sing(M)=L$. In other words, $M$ is a Levi-flat hypersurface with an ILS at $L$. 
If $P(x,y)$ is the germ $A_{\infty}$, 
we prove that Theorem \ref{theo-uno} is true in the case $n=2$. 
\begin{secondtheorem}\label{theo-dos}
Let $M=\{F=0\}$ be a germ of an irreducible real-analytic Levi-flat hypersurface on $(\mathbb{C}^{3},0)$. Suppose that $F$ is defined by $$F(x,y)=\mathcal{R}e(y_{1}^{2}+y_{2}^{2})+H(x,y),$$
where $H$ is a germ of real-analytic function such that $H(x,0)=0$ and $j_{0}^{k}(H)=0$ for $k=2$.
Then there exists a biholomorphism $\varphi:(\mathbb{C}^{3},0)\rightarrow(\mathbb{C}^{3},0)$ preserving $L$ such that
$\varphi(M)=\{\mathcal{R}e(y_{1}^{2}+y_{2}^{2})=0\}$.
\end{secondtheorem}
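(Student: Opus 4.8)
The plan is to reduce $M$ to the zero locus of the real part of a holomorphic germ, and then to normalise that germ by a parametrised Morse argument along $L$. For the reduction I would complexify: writing $Z=(x,y_1,y_2)$ and replacing $\bar Z$ by an independent variable $W=(w_0,w_1,w_2)$, the defining function complexifies to a holomorphic $\mathcal F(Z,W)$ with $\mathcal F(Z,\bar Z)=F$ and $2$-jet $\tfrac12(y_1^2+y_2^2)+\tfrac12(w_1^2+w_2^2)$. Recall that $M=\{\mathcal{R}e(h)=0\}$ for a holomorphic $h$ precisely when $\mathcal F$ factors, up to a holomorphic unit, as a sum $A(Z)+B(W)$ separating the two groups of variables; the reality of $F$ then forces $B(W)=\overline{A}(W)$ and $h:=A$. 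Levi-flatness of $M$ is the vanishing of the Levi determinant of $\mathcal F$ on $\mathcal M=\{\mathcal F=0\}$, an analytic condition on $\mathcal F$; solving it order by order, the nondegeneracy of the quadratic parts in $y$ and in $w$ makes the linearised operator at each step invertible, which should produce both the unit and the separated form $A(Z)+\overline{A}(W)$ by a convergent (implicit-function/Newton) scheme. Equivalently, since $\sing(M)=L$ has real codimension $4$ in $\mathbb C^3$, one may invoke the theorem of Cerveau--Lins Neto \cite{alcides} to obtain $h$ directly. Either way I arrive at $M=\{\mathcal{R}e(h)=0\}$ with $h\colon(\mathbb C^3,0)\to(\mathbb C,0)$ holomorphic; comparing with $F$ (two sign-changing defining functions of the irreducible $M$ differ by a positive unit $u$) and using that a holomorphic germ with vanishing real part is constant, the quadratic part of $h$ is $u(0)(y_1^2+y_2^2)$, so after rescaling $h$ we may assume $j_0^2 h=y_1^2+y_2^2$.

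Next I would use the isolated line singularity to place $h$ in $I^2$. Since $L\subset\sing(M)$ we have $dF\equiv 0$ along $L$; from $\mathcal{R}e(h)=uF$ and $F|_L=0$ this gives $d(\mathcal{R}e(h))|_L=0$, hence $dh|_L=0$. Together with $h|_L=0$ this is exactly the vanishing of the zeroth and first order $y$-Taylor coefficients of $h$ along $L$, i.e. $h(x,0)=0$ and $\partial_{y_j}h(x,0)=0$ for $j=1,2$, so $h\in I^2$. Consequently, for each fixed small $x$ the germ $y\mapsto h(x,y)$ has a critical point at $y=0$ with critical value $0$, and its $y$-Hessian there equals $2\,\id$ when $x=0$, hence stays nondegenerate for all small $x$.

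Finally I would invoke the holomorphic Morse (splitting) lemma with the parameter $x$: there is a germ $\Phi(x,y)$, holomorphic in $(x,y)$ with $\Phi(x,0)=0$ and $\partial_y\Phi(0,0)$ invertible, such that $h(x,\Phi(x,y))=y_1^2+y_2^2$ --- over $\mathbb C$ a nondegenerate quadratic form is equivalent to $y_1^2+y_2^2$. The germ $\varphi(x,y)=(x,\Phi(x,y))$ is then a biholomorphism fixing the $x$-axis, so $\varphi(L)=L$, and $h\circ\varphi=y_1^2+y_2^2$; therefore $\varphi^{-1}(M)=\{\mathcal{R}e(h\circ\varphi)=0\}=\{\mathcal{R}e(y_1^2+y_2^2)=0\}$, and renaming $\varphi^{-1}$ as $\varphi$ yields the claim. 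I expect the reduction to $\{\mathcal{R}e(h)=0\}$ to be the main obstacle, the delicate point being that in $\mathbb C^3$ the singular line sits at the borderline of the codimension hypotheses, so the separation of variables for $\mathcal F$ may have to be carried out by hand rather than quoted. It is worth stressing why $n=2$ is admissible here: the germ $A_\infty$ is nondegenerate in all the variables $y$, so its critical locus is exactly $L$ and the normalising change of coordinates can be taken tangent to the fibration by $x$, i.e. preserving $L$, via the parametrised Morse lemma; for the remaining germs of Table 1 the $y$-Hessian degenerates along $L$ and one must appeal to Siersma's finer $\mathcal D_I$-theory, which is what forces $n\ge 3$ in Theorem \ref{theo-uno}.
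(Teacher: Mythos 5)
Your endgame is fine and essentially matches the paper's: once one knows $M=\{\mathcal{R}e(h)=0\}$ for a holomorphic germ $h$, your argument that $h\in I^{2}$ with quadratic part a unit multiple of $y_1^2+y_2^2$, followed by the parametrized holomorphic Morse lemma (equivalently, Siersma's $A_\infty$ normal form from Theorem \ref{teo_uno}) to get a biholomorphism preserving $L$, is correct and is how the paper concludes. The genuine gap is in the step you yourself flag as "the main obstacle": producing $h$ at all. Your fallback claim that "since $\sing(M)=L$ has real codimension 4 in $\mathbb{C}^3$, one may invoke the theorem of Cerveau--Lins Neto to obtain $h$ directly" is false. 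In the notation of Theorem \ref{alcides-theorem} we have $n=2$ here, and $cod_{M^{*}_{\mathbb{C}}}(\sing(\eta_{\mathbb{C}}|_{M^{*}_{\mathbb{C}}}))=2$; part (1) of that theorem requires both $n\geq 3$ and codimension $\geq 3$, so it is unavailable, and part (2) applies only after one has proved that $\mathcal{L}_{\mathbb{C}}$ admits a non-constant holomorphic first integral. No smallness of $\sing(M)$ by itself delivers that first integral; establishing it is precisely why Theorem \ref{theo-dos} is a separate theorem and is the content of the paper's entire Section 4.

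Your primary route --- solving the complexified Levi-flatness condition order by order to force a factorization $F_{\mathbb{C}}=U_{\mathbb{C}}\cdot\bigl(A(Z)+\overline{A}(W)\bigr)$ --- is only a sketch, and the asserted invertibility of the linearized operator is exactly where it breaks: the quadratic part $\tfrac12(y_1^2+y_2^2)+\tfrac12(w_1^2+w_2^2)$ is degenerate as a form in all six variables (it misses $x$ and $z$), which is the situation not covered by the Morse-type (Burns--Gong \cite{burns}) analysis you are implicitly extrapolating from; nothing in the sketch explains how the flat directions along $\{y=w=0\}$ are controlled, nor why the scheme converges. The paper instead proceeds geometrically: it blows up $\mathbb{C}^{6}$ along $C=\{y=w=0\}$, verifies that the strict transform $\tilde M_{\mathbb{C}}$ is smooth and that $S=(E\cap\tilde M_{\mathbb{C}})\setminus\sing\tilde{\mathcal{F}}$ is a leaf of the pulled-back foliation, computes from (\ref{A_equa}) that every generator of the holonomy of $S$ has derivative $e^{-\pi i}=-1$, uses closedness of the leaves (Lemma \ref{closed-lemma}) to conclude via Lemma \ref{integral-theorem} (from \cite{singular}) that $\mathcal{L}_{\mathbb{C}}$ has a non-constant holomorphic first integral, and only then invokes part (2) of Theorem \ref{alcides-theorem} to get $h$. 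Without this (or a genuinely completed version of your iteration scheme) your proof does not start. A smaller point: your closing diagnosis of why Theorem \ref{theo-uno} needs $n\geq 3$ is also off --- the restriction comes from the codimension hypothesis $cod_{M^{*}_{\mathbb{C}}}(\sing(\eta_{\mathbb{C}}|_{M^{*}_{\mathbb{C}}}))=n\geq 3$ in part (1) of Theorem \ref{alcides-theorem}, not from any limitation of Siersma's $\mathcal{D}_I$-theory, which works in all dimensions.
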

\par The above result should be compared to \cite[Theorem 1.1]{burns}. This result can be viewed as a Morse's Lemma for Levi-flat hypersurfaces with an ILS at $L$. The problem of normal forms of Levi-flat hypersurfaces in $\mathbb{C}^{3}$ with an ILS seems difficult in the other cases.  To prove these results we use techniques of holomorphic foliations developed in \cite{alcides} and \cite{tese}. Another normal forms of singular Levi-flat hypersurfaces have been obtained in \cite{burns}, \cite{normal} and \cite{singular}.
\par This paper is organized as follows: In Section 2, we recall some definitions and known results about Levi-flat and holomorphic foliations. Section 3 is devoted to prove Theorem \ref{theo-uno}. In Section 4, we prove Theorem \ref{theo-dos}. Finally, in Section 5, using holomorphic foliations, we give sufficient conditions for that a Levi-flat hypersurface with a complex line as singularity to be a pullback of a real-analytic curve in $\mathbb{C}$ via a holomorphic function, (see Theorem \ref{line_complex}).

\section{Levi-flat hypersurfaces and Foliations}
\par In this section we works with germs at $0\in\mathbb{C}^{n+1}$ of irreducible real-analytic hypersurfaces and of 
codimension one holomorphic foliations. Let $M=\{F=0\}$, where $F:(\mathbb{C}^{n+1},0)\rightarrow(\mathbb{R},0)$ is a germ of an irreducible real-analytic function, and 
$M^{*}:=\{F=0\}\backslash\{dF=0\}$. Let us define the singular set of $M$ (or ``set of critical points" of $M$) by
\begin{equation}\label{singular}
\sing(M):=\{F=0\}\cap\{dF=0\}.
\end{equation} 
Note that $\sing(M)$ contains all points $q\in M$ such that $M$ is smooth at $q$, but the codimension of $M$ at $q$ is at least two. 
In general the singular set of a real-analytic  subvariety $M$ in a complex manifold is defined as the set of points near which $M$ is not a 
real-analytic submanifold (of any dimension) and ``in general" has structure of a semianalytic set; see for instance, \cite{singularlebl}. 
In this paper, we work with $\sing(M)$ as defined in (\ref{singular}). We recall that (in this case)
the Levi distribution $L$ on $M^{*}$ is defined by
\begin{align}
L_{p}:=ker(\partial{F}(p))\subset T_{p}M^{*}=ker(dF(p)),\,\,\,\,\text{for any} \,\,p\in M^{*}.
\end{align}
Let us suppose that $M$ is \textit {Levi-flat}, this implies that $M^{*}$ is foliated by complex codimension one holomorphic submanifolds immersed on
$M^{*}$. 
\par Note that the Levi distribution $L$ on $M^{*}$ can be defined by the real-analytic 1-form $\eta=i(\partial{F}-\bar{\partial}F)$, which is called the Levi 1-form of $F$. It is well known that the integrability condition of $L$ is equivalent to equation $(\partial{F}-\bar{\partial}F)\wedge\partial\bar{\partial}F|_{M^{*}}=0.$ 
\par Let us consider the series Taylor of $F$ at $0\in\mathbb{C}^{n+1}$, $$F(x,y)=\sum_{i,\mu,j,\nu}F_{i \mu j\nu}x^{i}y^{\mu}\bar{x}^{j}\bar{y}^{\nu}$$
where $\bar{F}_{i\mu j\nu}=F_{j\nu i\mu}$; $i,j\in\mathbb{N}$, $\mu=(\mu_{1},\ldots,\mu_{n})$, $\nu=(\nu_{1},\ldots,\nu_{n})$, $(x,y)\in\mathbb{C}\times\mathbb{C}^n$, 
$y^{\mu}=y_{1}^{\mu_{1}}\ldots y_{n}^{\mu_{n}}$ and $\bar{y}^{\nu}=\bar{y}_{1}^{\nu_{1}}\ldots \bar{y}_{n}^{\nu_{n}}$. The complexification $F_{\mathbb{C}}\in\mathcal{O}_{2n+2}$ of $F$ is defined by the serie
$$F_{\mathbb{C}}(x,y,z,w)=\sum_{i,\mu,j,\nu}F_{i\mu j\nu}x^{i}y^{\mu}z^{j}w^{\nu},$$ where $z\in\mathbb{C}$, $w=(w_1,\ldots,w_n)\in\mathbb{C}^{n}$ and $w^{\nu}=w_{1}^{\nu_{1}}\ldots w_{n}^{\nu_{n}}$.
Notice that $F(x,y)=F_{\mathbb{C}}(x,y,\bar{x},\bar{y})$. The complexification $M_{\mathbb{C}}$ of $M$ is defined as $M_{\mathbb{C}}:=\{F_{\mathbb{C}}=0\}$ and defines a complex subvariety in $\mathbb{C}^{2n+2}$, its regular part is $M^{*}_{\mathbb{C}}:=M_{\mathbb{C}}\backslash\{dF_{\mathbb{C}}=0\}$. Now, assume that $M$ is Levi-flat. Then the integrability condition of $$\eta=i(\partial{F}-\bar{\partial}F)|_{M^{*}}$$ implies that $\eta_{\mathbb{C}}|_{M^{*}_{\mathbb{C}}}$ is integrable, where 
 $$\eta_{\mathbb{C}}:=i[(\partial_{x}F_{\mathbb{C}}+\partial_{y}F_{\mathbb{C}})-(\partial_{z}F_{\mathbb{C}}+\partial_{w}F_{\mathbb{C}})].$$
 Therefore
$\eta_{\mathbb{C}}|_{M^{*}_{\mathbb{C}}}$ defines a codimension one holomorphic foliation $\mathcal{L}_{\mathbb{C}}$ on $M^{*}_{\mathbb{C}}$ that will be called the complexification of $\mathcal{L}$.
 
\par Let $W:=M_{\mathbb{C}}^{*}\backslash  \sing(\eta_{\mathbb{C}}|_{M^{*}_{\mathbb{C}}})$ and denote by $L_{\zeta}$ the leaf of $\mathcal{L}_{\mathbb{C}}$ through $\zeta$, where $\zeta\in W$. The next results will be used several times along of the paper.
\begin{lemma}[Cerveau-Lins Neto \cite{alcides}]\label{closed-lemma}
For any $\zeta\in W$, the leaf $L_{\zeta}$ of $\mathcal{L}_{\mathbb{C}}$ through $\zeta$ is closed in $M_{\mathbb{C}}^{*}$.
\end{lemma}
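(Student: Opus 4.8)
The plan is to show that $\mathcal{L}_{\mathbb{C}}$ is, locally along $M_{\mathbb{C}}^{*}$, defined by a \emph{holomorphic first integral} depending only on the first group of variables, and then to upgrade this local picture to the global statement about leaves.

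First I would rewrite the foliation in a more usable form. Write $p=(x,y)$ and $q=(z,w)$, so that $\eta_{\mathbb{C}}=i[d_{p}F_{\mathbb{C}}-d_{q}F_{\mathbb{C}}]$, where $d_{p}$ and $d_{q}$ denote the partial differentials in the two groups of variables and $dF_{\mathbb{C}}=d_{p}F_{\mathbb{C}}+d_{q}F_{\mathbb{C}}$. Since $T_{\zeta}M_{\mathbb{C}}^{*}=\ker dF_{\mathbb{C}}(\zeta)$ for $\zeta\in M_{\mathbb{C}}^{*}$, the two partial differentials satisfy $d_{p}F_{\mathbb{C}}=-d_{q}F_{\mathbb{C}}$ when restricted to $TM_{\mathbb{C}}^{*}$, and therefore
$$\eta_{\mathbb{C}}\big|_{M_{\mathbb{C}}^{*}}=2i\,d_{p}F_{\mathbb{C}}\big|_{M_{\mathbb{C}}^{*}}=-2i\,d_{q}F_{\mathbb{C}}\big|_{M_{\mathbb{C}}^{*}}.$$
Consequently the tangent space to the leaf of $\mathcal{L}_{\mathbb{C}}$ at a point of $W$ is exactly $\ker d_{p}F_{\mathbb{C}}\cap\ker d_{q}F_{\mathbb{C}}$.

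Next I would produce the local first integral using Cartan's normal form. Near a regular point of $M$ there are holomorphic coordinates in which $F=\mathcal{R}e(\phi)=\tfrac12(\phi+\bar\phi)$ for a holomorphic $\phi=\phi(x,y)$; complexifying, $F_{\mathbb{C}}=\tfrac12\big(\phi(p)+\bar\phi(q)\big)$, where $\bar\phi$ is the holomorphic function obtained by conjugating the coefficients of $\phi$. On $M_{\mathbb{C}}^{*}$ one has $\phi(p)=-\bar\phi(q)$, hence $d_{q}\bar\phi=-d_{p}\phi$ along $TM_{\mathbb{C}}^{*}$ and, combining with the previous step,
$$\eta_{\mathbb{C}}\big|_{M_{\mathbb{C}}^{*}}=i\,d\phi\big|_{M_{\mathbb{C}}^{*}}.$$
Thus $\omega:=\eta_{\mathbb{C}}|_{M_{\mathbb{C}}^{*}}$ is a \emph{closed} holomorphic $1$-form whose local primitive is a constant multiple of the function $\phi$, which depends only on the $(x,y)$-variables. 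In particular the leaves of $\mathcal{L}_{\mathbb{C}}$ are locally the level sets $\{\phi=\mathrm{const}\}\cap M_{\mathbb{C}}^{*}$, and each such level set is a closed analytic subset.

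The main obstacle is the passage from this local description to the global closedness of $L_{\zeta}$ in $M_{\mathbb{C}}^{*}$. Since $\omega$ is closed, $L_{\zeta}$ consists of the points that can be joined to $\zeta$ by a path along which the (a priori multivalued) primitive $\Phi=\int\omega$ returns to its initial value, and $L_{\zeta}$ fails to be closed precisely when $\omega$ has a nondiscrete group of periods. The plan is to rule this out. I would continue $\phi$ analytically along the leaf and argue that it stays single-valued: the reality condition $\bar F_{i\mu j\nu}=F_{j\nu i\mu}$ makes $M_{\mathbb{C}}^{*}$ invariant under the anti-holomorphic involution $\sigma(x,y,z,w)=(\bar z,\bar w,\bar x,\bar y)$, which interchanges the roles of $\phi(p)$ and $\bar\phi(q)$ and so forces the period group of $\omega$ to be symmetric; together with the fact that a local primitive is a function of $(x,y)$ alone, this should show that the monodromy is trivial and that $\phi$ extends to a global holomorphic first integral, whence the leaves are its level sets and closed. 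Equivalently, one may argue that $\overline{L_{\zeta}}$ is an invariant, pure-dimensional analytic subset of $M_{\mathbb{C}}^{*}$ of the same dimension as $L_{\zeta}$; irreducibility of the leaf then forces $\overline{L_{\zeta}}=L_{\zeta}$. Controlling this monodromy, equivalently establishing the analyticity of the closure, is the crux of the argument; the remaining steps are routine.
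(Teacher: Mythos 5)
First, a point of reference: the paper itself gives no proof of Lemma \ref{closed-lemma} --- it is quoted from Cerveau--Lins Neto \cite{alcides} --- so your attempt has to be measured against the proof in that source, and on its own merits. Your first step is correct, and it is the same identity the paper uses in Section 3: writing $p=(x,y)$, $q=(z,w)$, on $M_{\mathbb{C}}^{*}$ one has $\eta_{\mathbb{C}}=2i\,d_{p}F_{\mathbb{C}}=-2i\,d_{q}F_{\mathbb{C}}$, so the tangent spaces of $\mathcal{L}_{\mathbb{C}}$ are $\ker d_{p}F_{\mathbb{C}}\cap\ker d_{q}F_{\mathbb{C}}$. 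From there on, however, the argument has genuine gaps. (i) Cartan's normal form applies only near points of $M^{*}$, i.e. near the totally real diagonal $\{(\xi,\bar{\xi}):\xi\in M^{*}\}\subset M_{\mathbb{C}}^{*}$; a point $\zeta\in W$ lies in general nowhere near this diagonal, so the model $F_{\mathbb{C}}=\frac{1}{2}(\phi(p)+\bar{\phi}(q))$ --- and with it your conclusion that $\omega:=\eta_{\mathbb{C}}|_{M_{\mathbb{C}}^{*}}$ is closed with primitive essentially $\phi$ --- is justified only on a neighborhood of the diagonal, not on the set where the lemma lives. (ii) Even near the diagonal it is inexact: Cartan gives $F=U\cdot\mathcal{R}e(\phi)$ with a real-analytic unit $U$, hence $\eta_{\mathbb{C}}|_{M_{\mathbb{C}}^{*}}=i\,U_{\mathbb{C}}\,d\phi|_{M_{\mathbb{C}}^{*}}$, which is \emph{not} closed; closedness of $\eta_{\mathbb{C}}|_{M_{\mathbb{C}}^{*}}$ is not even an invariant notion, since replacing $F$ by $UF$ multiplies the restricted form by a unit while changing neither $M$ nor $\mathcal{L}_{\mathbb{C}}$. (iii) Most seriously, the step you yourself call the crux --- trivial monodromy, discreteness of periods, equivalently analyticity of $\overline{L_{\zeta}}$ --- is exactly the content of the lemma, and the mechanism you propose cannot produce it: invariance under the involution $\sigma$ only makes the period group symmetric, and a conjugation-symmetric subgroup of $(\mathbb{C},+)$ can perfectly well be dense. (Local holomorphic first integrals exist for \emph{every} nonsingular codimension-one holomorphic foliation, so their existence alone carries no global information.) As it stands, the proposal proves the lemma only in the situation where a single-valued global first integral is already given, i.e. it assumes the conclusion.

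The missing idea --- and the one underlying the proof in \cite{alcides} --- is an analytic set that traps the whole leaf; this is precisely how one establishes the analyticity of $\overline{L_{\zeta}}$ that you correctly identified as the equivalent goal. Since $\mathcal{L}_{\mathbb{C}}$ is cut out on $M_{\mathbb{C}}^{*}$ both by $d_{p}F_{\mathbb{C}}$ (involving only $dx,dy$) and by $d_{q}F_{\mathbb{C}}$ (involving only $dz,dw$), at \emph{every} point of $W$ --- with no reference to the diagonal --- the foliation is locally the pullback of a codimension-one foliation under each of the projections $(p,q)\mapsto p$ and $(p,q)\mapsto q$, so its plaques are products $\Sigma_{1}\times\Sigma_{2}\subset M_{\mathbb{C}}$. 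Propagating along chains of plaques with the identity theorem (a holomorphic function vanishing on a nonempty open piece of the connected manifold $\Sigma_{1}$ vanishes on all of $\Sigma_{1}$), one finds that the entire leaf through $\zeta=(p_{0},q_{0})$ lies in $V_{\zeta}:=\{F_{\mathbb{C}}(p,q_{0})=0\}\cap\{F_{\mathbb{C}}(p_{0},q)=0\}\cap M_{\mathbb{C}}^{*}$, a complexified Segre variety. The hypothesis $\zeta\in W$ guarantees $F_{\mathbb{C}}(\cdot,q_{0})\not\equiv 0$ and $F_{\mathbb{C}}(p_{0},\cdot)\not\equiv 0$ (otherwise $\eta_{\mathbb{C}}$ would vanish at $\zeta$), so $V_{\zeta}$ is analytic of pure dimension $2n=\dim L_{\zeta}$. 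A leaf contained in a closed analytic set of its own dimension fills out whole irreducible components of it off a proper analytic subset, so its closure is analytic and the closedness assertion follows by the standard leaf-versus-leaf argument inside $V_{\zeta}$. No period or monodromy control is needed at any stage.
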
 
\begin{definition}
The algebraic dimension of $\sing(M)$ is the complex dimension of the singular set of $M_{\mathbb{C}}$.
\end{definition}
\par The following result will be used enunciated in the context of Levi-flat hypersurfaces in $\mathbb{C}^{n+1}$.
\begin{theorem}[Cerveau-Lins Neto \cite{alcides}]\label{alcides-theorem}
 Let $M=\{F=0\}$ be a germ of an irreducible analytic Levi-flat hypersurface at $0\in\mathbb{C}^{n+1}$, $n\geq{2}$, with Levi 1-form $\eta=i(\partial{F}-\bar{\partial} F)$. Assume that the algebraic dimension of $\sing(M)\leq 2n-2$. Then there exists a unique germ at $0\in\mathbb{C}^{n+1}$ of holomorphic codimension one foliation $\mathcal{F}_{M}$ tangent to $M$, if one of the following conditions is fulfilled:
\begin{enumerate}
\item $n\geq 3$ and $cod_{M_{\mathbb{C}}^{*}}(\sing(\eta_{\mathbb{C}}|_{M_{\mathbb{C}}^{*}}))\geq 3$.
\item  $n\geq 2$, $cod_{M_{\mathbb{C}}^{*}}(\sing(\eta_{\mathbb{C}}|_{M_{\mathbb{C}}^{*}}))\geq 2$
and $\mathcal{L}_{\mathbb{C}}$ admits a non-constant holomorphic first integral.
\end{enumerate}
Moreover, in both cases the foliation $\mathcal{F}_{M}$ admits a non-constant holomorphic first integral $f$ such that $M=\{\mathcal{R}e(f)=0\}$.
\end{theorem}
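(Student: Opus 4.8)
The plan is to move the entire problem to the complexification $M_{\mathbb{C}}\subset\mathbb{C}^{2n+2}$, where the Levi foliation becomes the genuinely holomorphic foliation $\mathcal{L}_{\mathbb{C}}$ cut out by $\eta_{\mathbb{C}}$; there I would produce a holomorphic first integral, and then descend it to a holomorphic germ $f\colon(\mathbb{C}^{n+1},0)\to(\mathbb{C},0)$ in the original variables $(x,y)$. The foliation $\mathcal{F}_M$ will be defined as the foliation by the level sets $\{f=\text{const}\}$, its tangency to $M$ being automatic once the level sets recover the Levi leaves on $M^{*}$; the reality structure of $F_{\mathbb{C}}$ will then be used to pin down the normalization $M=\{\mathcal{R}e(f)=0\}$.

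First I would construct a holomorphic first integral of $\mathcal{L}_{\mathbb{C}}$. Under hypothesis (1) the manifold $M_{\mathbb{C}}^{*}$ is smooth of dimension $2n+1\geq 7$ and $\eta_{\mathbb{C}}|_{M_{\mathbb{C}}^{*}}$ is an integrable holomorphic $1$-form whose singular set has codimension at least $3$, so Malgrange's singular Frobenius theorem supplies a holomorphic first integral near a regular point. Under hypothesis (2) a first integral is granted by assumption, but now only with singular set of codimension $\geq 2$. In either case the genuinely delicate point, and the one I expect to be the main obstacle, is to promote the locally defined first integral to one defined on the whole germ of $M_{\mathbb{C}}$ at $0$, including across $\sing(\eta_{\mathbb{C}}|_{M_{\mathbb{C}}^{*}})$ and the singular locus of $M_{\mathbb{C}}$. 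Here I would invoke Lemma \ref{closed-lemma}: because every leaf of $\mathcal{L}_{\mathbb{C}}$ is closed in $M_{\mathbb{C}}^{*}$, the first integral is proper along the fibration by leaves, and together with the codimension hypothesis a Hartogs/Riemann-type extension lets it cross the singular set while remaining holomorphic and single-valued.

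Next comes the descent, which rests on the two projections $\pi_1(x,y,z,w)=(x,y)$ and $\pi_2(x,y,z,w)=(z,w)$ restricted to $M_{\mathbb{C}}$. The key computation is that the fibres of $\pi_1$ are tangent to $\mathcal{L}_{\mathbb{C}}$: along a fibre $\{x=x_0,\,y=y_0\}$ one has $dx=dy=0$, so $\eta_{\mathbb{C}}$ reduces to $-i(\partial_{z}F_{\mathbb{C}}+\partial_{w}F_{\mathbb{C}})$, whose restriction to the fibre vanishes because $F_{\mathbb{C}}\equiv 0$ on $M_{\mathbb{C}}$; the same holds for $\pi_2$. Since the fibres are $n$-dimensional and the leaves $2n$-dimensional, each leaf is, near a generic point, the product of a $\pi_1$-fibre and a $\pi_2$-fibre. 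In particular the first integral is constant along $\pi_1$-fibres, hence factors as $f\circ\pi_1$ for a holomorphic germ $f$ on $(\mathbb{C}^{n+1},0)$. Defining $\mathcal{F}_M:=\{f=\text{const}\}$ gives a holomorphic codimension-one foliation tangent to $M$; its uniqueness follows because any such foliation must restrict to $\mathcal{L}$ on $M^{*}$, and the bound on the algebraic dimension of $\sing(M)$ (at most $2n-2$) forbids a competing extension.

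Finally, to identify $M$ with $\{\mathcal{R}e(f)=0\}$ I would exploit the anti-holomorphic involution $\sigma(x,y,z,w)=(\bar z,\bar w,\bar x,\bar y)$, which satisfies $\sigma^{2}=\id$, preserves $M_{\mathbb{C}}$ by virtue of the reality relation $\bar F_{i\mu j\nu}=F_{j\nu i\mu}$, and interchanges $\pi_1$ with $\pi_2$. Conjugating $f\circ\pi_1$ through $\sigma$ produces a companion first integral $\bar f\circ\pi_2$, which on $M_{\mathbb{C}}$ must be a holomorphic function of $f\circ\pi_1$, say $\bar f\circ\pi_2=\Phi(f\circ\pi_1)$. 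Restricting to the real points $z=\bar x,\,w=\bar y$ gives $\overline{f(x,y)}=\Phi(f(x,y))$ for $(x,y)\in M$, and $\sigma^{2}=\id$ forces $t\mapsto\overline{\Phi(t)}$ to be an anti-holomorphic involution of $(\mathbb{C},0)$. Linearizing it to the standard conjugation $t\mapsto\bar t$ by a biholomorphism $\psi$, and then composing with multiplication by $i$ — neither of which changes $\mathcal{F}_M$ — I replace $f$ by this normalized first integral, for which the fixed arc $f(M)$ becomes the imaginary axis; thus $\mathcal{R}e(f)=0$ on $M$, and the irreducibility of $M$ promotes this to the equality of germs $M=\{\mathcal{R}e(f)=0\}$. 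As indicated, the crux is the construction and extension of the first integral in the second step, the reality normalization being essentially formal once $f$ is available.
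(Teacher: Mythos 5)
First, a point of order: the paper contains no proof of this statement --- it is imported, with attribution, from Cerveau--Lins Neto \cite{alcides} (merely restated in $\mathbb{C}^{n+1}$), so your attempt can only be compared with that source and judged on its own merits. Your outline does match the broad architecture of the actual argument: complexify, obtain a holomorphic first integral of $\mathcal{L}_{\mathbb{C}}$, descend it through $\pi_1(x,y,z,w)=(x,y)$, and normalize using the reality involution. Your computation that the $\pi_1$- and $\pi_2$-fibres are invariant by $\mathcal{L}_{\mathbb{C}}$ (equivalently, that $\alpha$ and $\beta$ both define the foliation on $M^{*}_{\mathbb{C}}$, as in equation (3.1) of the paper) is correct and is indeed the mechanism that makes a descent possible.

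The genuine gap is the step you yourself call the crux, and your proposed fix for it would fail. In case (1), local first integrals exist (Frobenius at regular points of the form, Malgrange at points of its singular set, of codimension $\geq 3$), but the obstruction to producing a single-valued first integral on the germ of $M^{*}_{\mathbb{C}}$ is not an extension problem, it is a monodromy problem: analytic continuation of a local first integral around a loop in $M^{*}_{\mathbb{C}}\setminus\sing(\eta_{\mathbb{C}}|_{M^{*}_{\mathbb{C}}})$ returns it composed with a germ in $\Diffeo(\mathbb{C},0)$. No Hartogs/Riemann-type theorem addresses this (nor is the first integral known to be bounded, nor $M_{\mathbb{C}}$ normal, which such extension theorems would require), and ``the first integral is proper along the fibration by leaves'' is not a consequence of Lemma \ref{closed-lemma}. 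Converting closedness of leaves into triviality, or finiteness plus linearizability, of this monodromy is precisely the hard Mattei--Moussu-type content of \cite{alcides}; the present paper's Section 4 shows how much work this takes even in a single special case where condition (1) fails ($A_\infty$ with $n=2$): blow-up, explicit holonomy generators, and Lemma \ref{integral-theorem}. There are secondary but real gaps in the descent as well: to define $f$ by $h=f\circ\pi_1$ you need $\pi_1|_{M_{\mathbb{C}}}$ to be, as a germ, surjective and open onto a neighborhood of $0\in\mathbb{C}^{n+1}$ with connected fibres (otherwise $f$ is not even well defined, since distinct components of a fibre may lie on distinct leaves), plus an argument for holomorphy of $f$; and the functional equation $\bar f\circ\pi_2=\Phi(f\circ\pi_1)$ in your reality step holds only if $f\circ\pi_1$ is a primitive first integral (generic fibre equal to a single leaf), which must be arranged, e.g.\ by Stein factorization. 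None of this refutes the strategy --- these points are all dealt with in \cite{alcides} --- but as written your proposal asserts, rather than proves, exactly the steps where the theorem's difficulty is concentrated.
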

\section{Proof of Theorem \ref{theo-uno}}
We write
$$F(x,y)=\mathcal{R}e(P(x,y_{1},\ldots,y_{n}))+H(x,y_{1},\ldots,y_{n}),$$
where $P(x,y_{1},\ldots,y_{n})$ is one of the polynomials of the Table 1, $H:(\mathbb{C}^{n+1},0)\rightarrow(\mathbb{R},0)$ is a germ of real-analytic function such that $H(x,0)=0$ for all $x\in(\mathbb{C},0)$ and $j_{0}^{k}(H)=0$, for $k=\deg(P)$. The complexification of $F$ is given by
$$F_{\mathbb{C}}(x,y,z,w)=\frac{1}{2}P(x,y)+\frac{1}{2}P(z,w)+H_{\mathbb{C}}(x,y,z,w),$$
  thus $M_{\mathbb{C}}=\{F_{\mathbb{C}}(x,y,z,w)=0\}\subset(\mathbb{C}^{2n+2},0)$, where $z\in\mathbb{C}$ and $w=(w_1,\ldots,w_n)\in\mathbb{C}^n$.
\par Since $P(x,y)$ has an ILS at $L$, we get $\sing(M_{\mathbb{C}})=\{y=w=0\}\simeq\mathbb{C}^2$. In particular, the algebraic dimension of $\sing(M)$ is $2$. On the other hand, the complexification of $\eta=i(\partial{F}-\bar{\partial}F)$ is $$\eta_{\mathbb{C}}:=i[(\partial_{x}F_{\mathbb{C}}+\partial_{y}F_{\mathbb{C}})-(\partial_{z}F_{\mathbb{C}}+\partial_{w}F_{\mathbb{C}})].$$
Recall that $\eta|_{M^{*}}$ and $\eta_{\mathbb{C}}|_{M^{*}_{\mathbb{C}}}$ define $\mathcal{L}$ and $\mathcal{L}_{\mathbb{C}}$ respectively. Now we compute $\sing(\eta_{\mathbb{C}}|_{M^{*}_{\mathbb{C}}})$.
 We can write $dF_{\mathbb{C}}=\alpha+\beta$, with $$\alpha:=\frac{\partial{F_{\mathbb{C}}}}{\partial{x}}dx+\sum_{j=1}^{n}\frac{\partial{F_{\mathbb{C}}}}{\partial{y}_{j}}dy_{j}=\frac{1}{2}\frac{\partial{P}}{\partial{x}}(x,y)dx+
\frac{1}{2}\sum_{j=1}^{n}\frac{\partial{P}}{\partial{y}_{j}}(x,y)dy_{j}+\theta_{1}$$
and $$\beta:=\frac{\partial{F_{\mathbb{C}}}}{\partial{z}}dz+\sum_{j=1}^{n}\frac{\partial{F_{\mathbb{C}}}}{\partial{w}_{j}}dw_{j}=\frac{1}{2}\frac{\partial{P}}{\partial{z}}(z,w)dz+
\frac{1}{2}\sum_{j=1}^{n}\frac{\partial{P}}{\partial{w}_{j}}(z,w)dw_{j}+\theta_2$$
where $\theta_1=\frac{\partial{H_{\mathbb{C}}}}{\partial{x}}dx+\sum_{j=1}^{n}\frac{\partial{H_{\mathbb{C}}}}{\partial{z}_{j}}dz_{j}$ and
$\theta_2=\frac{\partial{H_{\mathbb{C}}}}{\partial{z}}dz+\sum_{j=1}^{n}\frac{\partial{H_{\mathbb{C}}}}{\partial{w}_{j}}dw_{j}$. 
\par Note that $\eta_{\mathbb{C}}=i(\alpha-\beta)$, and so
\begin{eqnarray}
\eta_{\mathbb{C}}|_{M^{*}_{\mathbb{C}}}=(\eta_{\mathbb{C}}+idF_{\mathbb{C}})|_{M^{*}_{\mathbb{C}}}=
2i\alpha|_{M^{*}_{\mathbb{C}}}=-2i\beta|_{M^{*}_{\mathbb{C}}}.
\end{eqnarray}
In particular, $\alpha|_{M^{*}_{\mathbb{C}}}$ and $\beta|_{M^{*}_{\mathbb{C}}}$
define $\mathcal{L}_{\mathbb{C}}$. Therefore  $\sing(\eta_{\mathbb{C}}|_{M^{*}_{\mathbb{C}}})$ can be split in two parts. In fact, let $M_{1}:=\{(x,y,z,w)\in M_{\mathbb{C}}| \frac{\partial{F}_{\mathbb{C}}}{\partial{z}}\neq 0$ or $\frac{\partial{F}_{\mathbb{C}}}{\partial{w_{j}}}\neq 0$ for some $j=1,\ldots,n\}$ and 
$M_{2}:=\{(x,y,z,w)\in M_{\mathbb{C}}| \frac{\partial{F}_{\mathbb{C}}}{\partial{x}}\neq 0$ or $\frac{\partial{F}_{\mathbb{C}}}{\partial{z_{j}}}\neq 0$ for some $j=1,\ldots,n\}$, then $M_{\mathbb{C}}=M_{1}\cup M_{2}$. If we denote by
$A_{0}=\frac{\partial{H_{\mathbb{C}}}}{\partial{x}}$, $A_{j}=\frac{\partial{H_{\mathbb{C}}}}{\partial{z}_{j}}$ for all $1\leq j\leq n$ and by $B_0=\frac{\partial{H_{\mathbb{C}}}}{\partial{z}}$, $B_j=\frac{\partial{H_{\mathbb{C}}}}{\partial{w}_{j}}$ for all $1\leq j\leq n$, we obtain that  $\sing(\eta_{\mathbb{C}}|_{M^{*}_{\mathbb{C}}})=X_{1}\cup X_{2}$, where
$$X_{1}:=M_{1}\cap\{\frac{\partial{P}}{\partial{x}}(x,y)+A_{0}=\frac{\partial{P}}{\partial{y}_{1}}(x,y)+A_{1}=
\ldots=\frac{\partial{P}}{\partial{y}_{n}}(x,y)+A_{n}=0\}$$ and 
$$X_{2}:=M_{2}\cap\{\frac{\partial{P}}{\partial{z}}(z,w)+B_{0}=\frac{\partial{P}}{\partial{w}_{1}}(z,w)+B_{1}=
\ldots=\frac{\partial{P}}{\partial{w}_{n}}(z,w)+B_{n}=0\}.$$
Since $P$ is a polynomial with an ILS at $L=\{y=0\}$, we  conclude that $$cod_{M^{*}_{\mathbb{C}}}\sing(\eta_{\mathbb{C}}|_{M^{*}_{\mathbb{C}}})=n.$$
\par By hypothesis $n\geq 3$,  then it follows from Theorem \ref{alcides-theorem}, part $(1)$ that there exists a germ $f\in\mathcal{O}_{n+1}$ such that the holomorphic foliation $\mathcal{F}$ defined by $df=0$ is tangent to $M$. Moreover $M=\{\mathcal{R}e(f)=0\}$. Note that if $M=\{\mathcal{R}e(f)=0\}=\{F=0\}$, with $F$ an irreducible germ, we must have that $\mathcal{R}e(f)=U\cdot F$, where $U$ is a germ of real-analytic function with $U(0)\neq 0$. Without loss of generality, we can assume that $U(0)=1$.
In particular, $\mathcal{R}e(f)=U\cdot F$ implies that $f=P+h.o.t$. According to Theorem \ref{teo_uno},  there exists a biholomorphism $\varphi:(\mathbb{C}^{n+1},0)\rightarrow(\mathbb{C}^{n+1},0)$ preserving $L$ such that $f\circ\varphi^{-1}=P$, ($f$ is $D_{I}$-equivalent to $P$, because $f$ is a germ with ILS at $L$). Therefore, $\varphi(M)=\{\mathcal{R}e(P)=0\}$ and the proof ends.

\section{Proof of Theorem \ref{theo-dos}}
The idea is to use Theorem \ref{alcides-theorem}, part (2). In order to prove our result in the case $n=2$, 
 we are going to prove that $\mathcal{L}_{\mathbb{C}}$ has a non-constant holomorphic first integral. 
\par We begin by a blow-up along $C:=\{y_{1}=y_{2}=w_{1}=w_{2}=0\}\simeq\mathbb{C}^{2}\subset\mathbb{C}^6$. Let $F(x,y_{1},y_{2})=\mathcal{R}e(y^{2}_1+y^{2}_2)+H$ and $M=\{F=0\}$ Levi-flat. Its complexification can be written as $$F_{\mathbb{C}}(x,y_1,y_2,z,w_1,w_2)=\frac{1}{2}(y^{2}_1+y^{2}_2)+\frac{1}{2}(w^{2}_1+w^{2}_2)+H_{\mathbb{C}}(x,y_1,y_2,z,w_1,w_2).$$ 
Note that $$\sing(M_{\mathbb{C}})=\{y=w=0\}=C.$$
\par Let $E$ be the exceptional divisor of the blow-up $\pi:\tilde{\mathbb{C}}^{6}\rightarrow\mathbb{C}^{6}$ along $C$. Denote by $\tilde{M}_{\mathbb{C}}:=\overline{\pi^{-1}(M_{\mathbb{C}}\setminus\{C\})}\subset\tilde{\mathbb{C}}^{6}$ the strict transform of  $M_{\mathbb{C}}$ via $\pi$ and by $\tilde{\mathcal{F}}:=\pi^{*}(\mathcal{L}_{\mathbb{C}})$ the foliation on $\tilde{M}_{\mathbb{C}}$.
\par Now, we consider an especial situation.
Suppose that $\tilde{M}_{\mathbb{C}}$ is smooth and set $\tilde{C}:=\tilde{M}_{\mathbb{C}}\cap E$. Moreover, assume that  $\tilde{C}$ is invariant by $\tilde{\mathcal{F}}$. Take $S=\tilde{C}\setminus\sing\tilde{\mathcal{F}}$, then $S$ is a smooth leaf of $\tilde{\mathcal{F}}$. Pick $p_{0}\in S$ and a transverse section $\sum$ through $p_{0}$. Let $G\subset \Diffeo(\sum,p_0)$ be the holonomy group of the leaf $S$ of $\tilde{\mathcal{F}}$. Since $\dm(\sum)=1$, we can assume that 
$G\subset\Diffeo(\sum,0)$. We state a fundamental lemma.
\begin{lemma}[Fern\'andez-P\'erez \cite{singular}]\label{integral-theorem}
In the above situation, suppose that the following properties are verified:
\begin{enumerate}
\item  For any $p\in S\backslash\sing{(\tilde{\mathcal{F}})}$ the leaf $L_{p}$ of $\tilde{\mathcal{F}}$ through $p$ is closed in $S$.
\item  $g'(0)$ is a primitive root of unity, for all $g\in G\backslash\{id\}$.
\end{enumerate}
 Then $\mathcal{L}_{\mathbb{C}}$ admits a non-constant holomorphic first integral.
\end{lemma}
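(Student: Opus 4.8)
The plan is to show that the finiteness of the holonomy group $G$ forces $\mathcal{L}_{\mathbb{C}}$ to admit a non-constant holomorphic first integral, following the Mattei--Moussu philosophy in the transverse dimension one situation. Since $\pi$ is a biholomorphism away from the exceptional divisor $E$, any holomorphic first integral of $\tilde{\mathcal{F}}$ defined on a saturated neighborhood of the leaf $S$ in $\tilde{M}_{\mathbb{C}}$ descends to a first integral of $\mathcal{L}_{\mathbb{C}}$; hence it suffices to produce the former from the transverse dynamics encoded by $G$.

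First I would analyze a single element $g\in G\setminus\{\id\}$. By hypothesis (2), $\lambda:=g'(0)$ is a primitive root of unity, say of order $k$, so $(g^{k})'(0)=1$. If $g^{k}\neq\id$, then $g^{k}$ is tangent to the identity but nontrivial, and such a germ has orbits that accumulate at $0$ along petals; this is incompatible with hypothesis (1), because the $g^{k}$-orbit of a point of $\Sigma$ consists of intersection points of a single leaf with $\Sigma$, so a non-closed, accumulating orbit produces a leaf that is not closed near $S$. Therefore $g^{k}=\id$, i.e. every element of $G$ is periodic, and by the classical averaging (Bochner) argument each such $g$ is linearizable. In particular the homomorphism $G\to\mathbb{C}^{*}$, $g\mapsto g'(0)$, is injective, since its kernel consists of elements tangent to the identity, which must be $\id$ by the preceding step.

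Next I would deduce that $G$ is finite. The holonomy is a representation of $\pi_{1}(S)$, and $S=\tilde{C}\setminus\sing\tilde{\mathcal{F}}$ is a Zariski-open subset of the algebraic set $\tilde{C}$, hence has finitely generated fundamental group; thus $G$, as a quotient, is finitely generated. The injection $g\mapsto g'(0)$ then realizes $G$ as a finitely generated subgroup of the group of roots of unity $\cong\mathbb{Q}/\mathbb{Z}$, and every finitely generated subgroup of $\mathbb{Q}/\mathbb{Z}$ is finite cyclic. Linearizing a generator therefore linearizes the whole group, so after a holomorphic change of the transverse coordinate $w$ we may assume $G=\langle w\mapsto\zeta w\rangle$ for a primitive $m$-th root of unity $\zeta$, where $m=|G|$. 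On $\Sigma$ the monomial $w^{m}$ is $G$-invariant and is a genuine first integral of the holonomy pseudogroup.

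Finally I would construct the first integral by extending this transverse invariant. Using the local product structure of $\tilde{\mathcal{F}}$ transverse to $S$ together with the closedness of the nearby leaves from (1), the invariant $w^{m}$ extends by analytic continuation along leaves to a holomorphic first integral $\tilde{f}$ of $\tilde{\mathcal{F}}$ on a saturated neighborhood of $S$; the closedness hypothesis is precisely what guarantees that this a priori multivalued continuation is single-valued and bounded, so that $\tilde{f}$ extends holomorphically across $\sing\tilde{\mathcal{F}}$ and $\tilde{C}$ by Riemann's removable singularity theorem. Pushing $\tilde{f}$ forward by $\pi$ yields the desired non-constant holomorphic first integral of $\mathcal{L}_{\mathbb{C}}$. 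The main obstacle I anticipate is exactly this extension step: turning the transverse invariant $w^{m}$ into a globally single-valued holomorphic first integral on a full neighborhood of $S$ and controlling its behavior along $E\cap\tilde{M}_{\mathbb{C}}$ and across the singular set. This is where hypothesis (1) and the geometry of $\tilde{M}_{\mathbb{C}}$ near $\tilde{C}$ must be used with care, in the spirit of the Mattei--Moussu theorem, and where the delicate removability verification takes place.
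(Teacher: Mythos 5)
Your proposal is correct and follows essentially the same route as the paper's proof: both rule out nontrivial tangent-to-identity elements via the accumulation of their orbits (contradicting closedness of leaves), conclude that $G$ is a finite, linearizable cyclic group with invariant monomial $w^{d}$, and then extend this transverse invariant along leaves to a bounded first integral that crosses the bad set by Riemann's extension theorem. The only differences are cosmetic — you establish periodicity before injectivity of $g\mapsto g'(0)$ while the paper does the reverse, and you invoke Bochner averaging and finite generation of $\pi_{1}(S)$ where the paper cites Mattei--Moussu — so the two arguments coincide in substance.
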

\begin{proof}
\par Let $G'=\{g'(0)/g\in G\}$ and consider the homomorphism $\phi:G\rightarrow G'$ defined by $\phi(g)=g'(0)$. We claim that $\phi$ is injective. In fact, assume that $\phi(g)=1$ and suppose by contradiction that $g\neq id$. In this case $g(z)=z+az^{r+1}+\ldots$, where $a\neq 0$. According to \cite{Loray}, the pseudo-orbits of
this transformation accumulate at $0\in(\sum,0)$, contradicting the fact that the leaves of $\tilde{\mathcal{F}}$ are closed and so the assertion is proved. Now, it suffices to prove that any element $g\in G$ has finite order (cf. \cite{mattei}). In fact, $\phi(g)=g'(0)$ is a root of unity thus $g$ has finite order because $\phi$ is injective. Hence, all transformations of $G$ have finite order and $G$ is linearizable.
\par This implies that there is a coordinate system $w$ on $(\sum,0)$ such that $G=\langle w\rightarrow\lambda w\rangle$, where $\lambda$ is a $d^{th}$-primitive root of unity (cf. \cite{mattei}). In particular, $\psi(w)=w^{d}$ is a first integral of $G$, that is $\psi\circ g=\psi$ for any $g\in G$.
\par Let $\Gamma$ be the union of the separatrices of $\mathcal{L}_{\mathbb{C}}$ through $0\in\mathbb{C}^{6}$ and $\tilde{\Gamma}$ be its strict transform under $\pi$. The first integral $\psi$ can be extended to a first integral $\varphi:\tilde{M}_{\mathbb{C}}\backslash\tilde{\Gamma}\rightarrow\mathbb{C}$ by setting $$\varphi(q)=\psi(\tilde{L}_{q}\cap \sum),$$ where $\tilde{L}_{p}$ denotes the leaf of $\tilde{\mathcal{F}}$ through $q$. Since $\psi$ is bounded (in a compact neighborhood of $0\in\sum$), so is $\varphi$. It follows from Riemann extension theorem that $\varphi$ can be extended holomorphically to $\tilde{\Gamma}$ with $\varphi(\tilde{\Gamma})=0$. This provides the first integral of $\mathcal{L}_{\mathbb{C}}$.
\end{proof}
\par The rest of the proof is devoted to prove that we are indeed in the conditions of Lemma \ref{integral-theorem}. It is follows from Lemma \ref{closed-lemma} that the leaves of $\mathcal{L}_{\mathbb{C}}$ are closed. Therefore, we need to prove that each generator of the holonomy group $G$ of $\tilde{\mathcal{F}}$ with respect to $S$ has finite order.

\par Consider for instance the chart $(U_{1},(x,t,s,z,u,v))$ of $\tilde{\mathbb{C}}^{6}$ where $$\pi(x,t,s,z,u,v)=(x,tu,su,z,u,vu)=(x,y_{1},y_2,z,w_1,w_2).$$
 We have $$\tilde{M}_{\mathbb{C}}\cap U_1=\{(x,t,s,z,u,v)\in U_1| 1+t^2+s^2+v^2+uH_1(x,t,s,z,u,v)=0\},$$
  where $H_{1}=H(x,ut,us,z,u,uv)/u^{3}$ and this fact imply that $$E\cap\tilde{M}_{\mathbb{C}}\cap U_1=\{(x,t,s,z,u,v)\in U_1| 1+t^2+s^2+v^2=u=0\}.$$
It is not difficult to see that these complex subvarieties are smooth.   Now, let us describe the foliation $\tilde{\mathcal{F}}$ on $U_{1}$. In fact, note that the foliation $\mathcal{L}_{\mathbb{C}}$ is defined by $\alpha|_{M^{*}_{\mathbb{C}}}=0$, where
$$\alpha=\frac{1}{2}\frac{\partial{P}}{\partial{x}}dx+\frac{1}{2}\frac{\partial{P}}{\partial{y_1}}dy_1+\frac{1}{2}\frac{\partial{P}}{\partial{y_2}}dy_2+
\frac{\partial{H}_{\mathbb{C}}}{\partial{x}}dx+\sum^{2}_{j=1}\frac{\partial{H}_{\mathbb{C}}}{\partial{y_{j}}}dy_{j}.$$
It follows that $\alpha=y_{1}dy_1+y_2dy_2+\frac{\partial{H}_{\mathbb{C}}}{\partial{x}}dx+\sum^{2}_{j=1}\frac{\partial{H}_{\mathbb{C}}}{\partial{y_{j}}}dy_{j}$, then $\tilde{\mathcal{F}}|_{U_{1}}$ is defined by $\tilde{\alpha}|_{\tilde{M}_{\mathbb{C}}\cap U_1}=0$, where
\begin{eqnarray}\label{A_equa}
\tilde{\alpha}=(t^2+s^2)du+utdt+usds+u\tilde{\theta},
\end{eqnarray}
and $$\tilde{\theta}=\frac{\pi^{*}(\frac{\partial{H_{\mathbb{C}}}}{\partial{x}}dx+\sum^{2}_{j=1}\frac{\partial{H_{\mathbb{C}}}}{\partial{y_{j}}}dy_{j})}{u^{2}}.$$
 Therefore, the singular set of $\tilde{\mathcal{F}}|_{U_{1}}$ is given by $$\sing\tilde{\mathcal{F}}|_{U_1}=\{u=t+is=0\}\cup\{u=t-is=0\}.$$
  On the other hand, note that the exceptional divisor $E$ is invariant by $\tilde{\mathcal{F}}$ and the intersection with $\sing\widetilde{\mathcal{F}}$ is
$$\sing\tilde{\mathcal{F}}|_{U_1}\cap E=\{u=t+is=v^2+1=0\}\cup\{u=t-is=v^2+1=0\}.$$ 
In particular, $S:=(E\cap\tilde{M}_{\mathbb{C}}) \backslash \sing\widetilde{\mathcal{L}}_{\mathbb{C}}$ is a leaf of $\widetilde{\mathcal{F}}$. We calculate the generators of the holonomy group $G$ of the leaf $S$. We work in the chart $U_1$, because of the symmetry of the variables in the definition of the variety $\tilde{M}_{\mathbb{C}}$. 
\par Pick $p_{0}=(0,1,0,0,0,0)\in S\cap U_{1}$ and a transversal $\sum=\{(0,1,0,0,\lambda,0)|\lambda\in\mathbb{C}\}$ parameterized by $\lambda$ at $p_0$.  We have that
$$\sing\tilde{\mathcal{F}}|_{U_1}\cap E=\{u=t+is=v^2+1=0\}\cup\{u=t-is=v^2+1=0\}.$$ 
For each $j=1,2$; let $\rho_j$ be a $2^{td}$-primitive root of $-1$. The fundamental group $\pi_1(S,p_0)$ can be written in terms of generators as
$$\pi_1(S,p_0)=\langle\gamma_j,\delta_j\rangle_{1\leq j\leq 2},$$
where for each $j=1,2$; $\gamma_j$ are loops that turn around $\{u=t+is=v-\rho_j=0\}$ and $\delta_j$
are loops that turns around $\{u=t-is=v-\rho_j=0\}$. Therefore, $G=\langle f_{j},g_{j}\rangle_{1\leq j\leq 2}$, where $f_{j}$ and $g_{j}$ correspond to $[\gamma_j]$ and $[\delta_j]$, respectively. We get from 
(\ref{A_equa}) that $f'_{j}(0)=e^{-\pi i}$ and $g'_{j}(0)=e^{-\pi i}$ for all $1\leq j\leq 2$. The proof of the theorem is complete.
\section{Levi-flat hypersurfaces with a complex line as singularity}
\par In this section, we work with the system of coordinates $(z_1,\ldots,z_n)\in\mathbb{C}^{n}$. The canonical local models examples of Levi-flat hypersurfaces $M$ in $\mathbb{C}^{3}$ such that $\sing(M)=L=\{z_1=z_2=0\}$ are $\{\mathcal{R}e(z^{2}_{1}+z^{2}_{2})=0\}$ and $\{z_1\bar{z}_2-\bar{z}_1z_2=0\}$. 
\par Recently, Burns and Gong \cite{burns} classified, up to local biholomorphism, all germs of quadratic Levi-flat hypersurfaces. Namely, up to biholomorphism, there is only five models:
$$\begin{tabular}{|c|l|r|}\hline
Type & Normal form & Singular set \\
\hline
$Q_{0,2k}$ & $\mathcal{R}e(z_{1}^{2}+z_{2}^{2}+\ldots +z^{2}_{k})$ & $\mathbb{C}^{n-k}$\\
\hline
$Q_{1,1}$ & $z^{2}_{1}+2z^{2}_{1}\bar{z}_{1}+z^{2}_{1}$ & empty\\
\hline
$Q^{\lambda}_{1,2}$ & $z^{2}_{1}+2\lambda z^{2}_{1}\bar{z}_{1}+z^{2}_{1}$ & $\mathbb{C}^{n-1}$ \\
\hline
$Q_{2,2}$ & $(z_{1}+\bar{z}_{1})(z_{2}+\bar{z}_{2})$ & $\mathbb{R}^2\times\mathbb{C}^{n-2}$ \\
\hline
$Q_{2,4}$ & $z_1\bar{z}_2-\bar{z}_1z_2$ & $\mathbb{C}^{n-2}$ \\
\hline

\end{tabular}$$
\begin{center}
Table 2. Levi-flat quadrics 
\end{center}
\par We address the problem of provide conditions to characterize singular Levi-flat hypersurfaces with a complex line as singularity. Using the classification due to Burns and Gong \cite{burns}, it is not hard to prove the following proposition.
\begin{proposition}
Suppose that $M$ is a quadratic real-analytic Levi-flat hypersurface in $\mathbb{C}^{n}$, $n\geq 3$ such that 
$\sing(M)=\{z_{1}=z_{2}=\ldots=z_{n-1}=0\}$. Then
\begin{enumerate}
\item If $n=3$, $M$ is biholomorphically equivalent to $Q_{0,2}$ or  $Q_{2,4}$.
\item  If $n\geq 4$,  $M$ is biholomorphically equivalent to $Q_{0,2(n-1)}$.
\end{enumerate}
\end{proposition}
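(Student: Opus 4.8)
The plan is to reduce everything to the Burns--Gong classification recorded in Table 2 and then to separate the five normal forms by the complex dimension of their singular loci. Since $M$ is a quadratic real-analytic Levi-flat hypersurface in $\mathbb{C}^n$, the classification of Burns and Gong \cite{burns} provides a germ of biholomorphism $\Phi$ of $(\mathbb{C}^n,0)$ taking $M$ to exactly one of $Q_{0,2k}$, $Q_{1,1}$, $Q^{\lambda}_{1,2}$, $Q_{2,2}$, $Q_{2,4}$. The crucial remark is that $\Phi$ is biholomorphic, so $\Phi(\sing(M)) = \sing(\Phi(M))$, and $\Phi$ preserves both the complex dimension of a complex-analytic set and the very property of a set being a complex submanifold. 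Hence it suffices to match the hypothesis $\sing(M) = \{z_1 = \cdots = z_{n-1} = 0\}$ --- a smooth complex line, of complex dimension $1$ and real dimension $2$ --- against the ``singular set'' column of Table 2.

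Next I would eliminate three of the five models directly. The quadric $Q_{1,1}$ has empty singular set, so it is discarded at once since $\sing(M)$ is a nonempty line. The quadric $Q^{\lambda}_{1,2}$ has $\sing = \mathbb{C}^{n-1}$, of complex dimension $n-1 \geq 2$ for $n \geq 3$, and is therefore excluded. The quadric $Q_{2,2}$ has singular set $\mathbb{R}^2 \times \mathbb{C}^{n-2}$; this is not a complex submanifold, and in any case its real dimension $2n-2$ is at least $4$ for $n \geq 3$, whereas a complex line has real dimension $2$. Thus $Q_{2,2}$ cannot occur, and we are left with the two families $Q_{0,2k}$ and $Q_{2,4}$.

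For the two survivors the argument becomes a dimension count. The singular set of $Q_{0,2k}$ is $\mathbb{C}^{n-k}$, so requiring $\dim_{\mathbb{C}} \sing(M) = 1$ forces $k = n-1$ and hence the model $Q_{0,2(n-1)}$, a solution available for every $n \geq 3$. The singular set of $Q_{2,4}$ is $\mathbb{C}^{n-2}$, so requiring $\dim_{\mathbb{C}} \sing(M) = 1$ forces $n = 3$. Consequently, for $n \geq 4$ only $Q_{0,2(n-1)}$ remains, which is assertion (2); for $n = 3$ both $Q_{0,2(n-1)}$ (the first quadric in the list) and $Q_{2,4}$ remain, which is assertion (1).

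The only point requiring care --- and the main, if modest, obstacle --- is to justify that the ``singular set'' column of Table 2 serves as a complete invariant for this purpose: one must record that a biholomorphism preserves the complex dimension of the singular locus and, equally importantly, distinguishes the genuinely complex-analytic singular sets (the various $\mathbb{C}^m$) from the merely real-analytic one ($\mathbb{R}^2 \times \mathbb{C}^{n-2}$ attached to $Q_{2,2}$). Once this invariance is in place, the remainder is the finite case check above. A secondary, routine point is that $\sing(M)$, being a linear complex line, is smooth, so that after identifying the model one could, if desired, compose with a linear automorphism to put the singular line in the coordinate form $\{z_1 = \cdots = z_{n-1} = 0\}$; but for the stated conclusion the identification of the model already suffices.
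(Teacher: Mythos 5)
Your proof is correct and follows essentially the same route as the paper: invoke the Burns--Gong classification, use invariance of the singular locus under biholomorphism, and match dimensions against the singular-set column of Table 2 to eliminate $Q_{1,1}$, $Q^{\lambda}_{1,2}$, $Q_{2,2}$, and (for $n\geq 4$) $Q_{2,4}$. Your version is merely more explicit than the paper's two-line argument --- in particular in spelling out the invariance of $\sing(M)$ and in resolving the paper's notational slip by identifying the $n=3$ real quadric $\mathcal{R}e(z_1^2+z_2^2)$ with $Q_{0,2(n-1)}$.
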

\begin{proof}
To prove part (1), observe that only there are two models of $M$ which admits $\sing(M)=\{z_1=z_2=0\}$ as singularity,  $Q_{0,2}$ or  $Q_{2,4}$. Now to prove part (2), note that if $n\geq 4$, the real hypersurface $\{z_1\bar{z}_2-\bar{z}_1z_2=0\}$ has a complex subvariety of dimension $n-2$ as singularity. It is follows that $M$ is biholomorphically equivalent to $Q_{0,2(n-1)}$. 
\end{proof}
\par In order to obtain a characterization, we define the Segre varieties associated to real-analytic hypersurfaces. Let $M$ be a real-analytic hypersurface defined by $\{F=0\}$. Fix $p\in M$, the Segre variety associated to $M$ at $p$ is the complex variety in $(\mathbb{C}^{n},p)$ defined by
\begin{equation}\label{segre-variety}
Q_{p}:=\{z\in(\mathbb{C}^{n},p):F_{\mathbb{C}}(z,\bar{p})=0\}.
\end{equation}
\par  Now assume that $M$ is Levi-flat and denote by $L_{p}$ the leaf of $\mathcal{L}$ through $p\in M^{*}$. We denote by $Q'_{p}$ the union of all branches of $Q_{p}$ which are contained in $M$. Observe that $Q'_{p}$ could be the empty set when $p\in\sing(M)$. Otherwise, it is a complex variety of pure dimension $n-1$.
\par The following result is classical, we proved it here for completeness.
\begin{proposition}\label{leaf-proposition}
In above situation, $L_{p}$ is an irreducible component of $(Q_{p},p)$ and $Q'_{p}=L_{p}$. 
\end{proposition}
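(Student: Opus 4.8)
The plan is to reduce everything to the single inclusion $L_p\subseteq Q_p$; granting it, the two assertions follow from a dimension count and from the fact that $p\in M^{*}$ makes $Q_p$ smooth at $p$. So I would organize the argument in three steps: prove $L_p\subseteq Q_p$ by polarization, deduce that $L_p$ is a top-dimensional irreducible component of $(Q_p,p)$, and then identify $Q'_p$.

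For the inclusion, recall that $L_p$ is a germ of smooth complex submanifold of $M$ of dimension $n-1$ passing through $p$. Choose a holomorphic parametrization $\phi\colon(\mathbb{C}^{n-1},0)\to L_p$ with $\phi(0)=p$, and let $\bar\phi(s):=\overline{\phi(\bar s)}$ denote the holomorphic map obtained by conjugating the Taylor coefficients of $\phi$. Since $L_p\subset M$, the defining property of the complexification gives $F_{\mathbb{C}}(\phi(t),\overline{\phi(t)})=F(\phi(t))=0$ for all $t$ near $0$. I then introduce
$$G(t,s):=F_{\mathbb{C}}(\phi(t),\bar\phi(s)),$$
which is holomorphic in $(t,s)$, and note that on the antidiagonal $s=\bar t$ one has $\bar\phi(\bar t)=\overline{\phi(t)}$, so $G(t,\bar t)=F(\phi(t))\equiv0$. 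Expanding $G(t,s)=\sum_{\alpha,\beta}c_{\alpha\beta}t^{\alpha}s^{\beta}$, the vanishing $\sum_{\alpha,\beta}c_{\alpha\beta}t^{\alpha}\bar t^{\beta}\equiv0$ together with the linear independence of the real-analytic monomials $t^{\alpha}\bar t^{\beta}$ forces all $c_{\alpha\beta}=0$, hence $G\equiv0$. Setting $s=0$ gives $F_{\mathbb{C}}(\phi(t),\bar p)=0$ for all $t$, i.e. $L_p\subseteq Q_p$.

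Next I would check that $Q_p$ is a smooth hypersurface germ at $p$. Since $\partial_{z}F_{\mathbb{C}}(p,\bar p)=\partial F(p)$ and $p\in M^{*}$ means $dF(p)\neq0$, which forces $\partial F(p)\neq0$ (for real $F$ one has $\bar\partial F=\overline{\partial F}$, so $\partial F(p)=0$ would give $dF(p)=0$), the function $z\mapsto F_{\mathbb{C}}(z,\bar p)$ has nonvanishing differential at $p$. Thus $(Q_p,p)$ is irreducible of pure dimension $n-1$. As $L_p\subseteq Q_p$ is irreducible of the same dimension $n-1$, it is an irreducible component of $(Q_p,p)$; in fact $L_p=(Q_p,p)$ as germs.

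Finally, $Q'_p=L_p$ is immediate: the germ $(Q_p,p)$ has the single branch $L_p$, and this branch lies in $M$ because leaves of the Levi foliation are contained in $M^{*}\subset M$; hence $Q'_p=L_p$. (Should one wish to avoid invoking smoothness of $Q_p$, the inclusion $Q'_p\subseteq L_p$ may instead be deduced from uniqueness of the leaf: any branch of $Q_p$ contained in $M$ is a complex hypersurface germ through $p$ inside $M$, hence tangent to the Levi distribution and therefore contained in the leaf $L_p$, so it equals $L_p$ by irreducibility and equality of dimensions.) The \textbf{main obstacle} is the inclusion $L_p\subseteq Q_p$, that is, the polarization step; once it is established, the dimension count and the determination of $Q'_p$ are routine.
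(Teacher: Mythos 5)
Your proof is correct, but it takes a genuinely different route from the paper's. The paper's own proof invokes E.~Cartan's theorem: in suitable holomorphic coordinates centered at $p\in M^{*}$ one has $M=\{\mathcal{R}e(z_{n})=0\}$, so $L_{p}=\{z_{n}=0\}$ is visibly a branch of $Q_{p}$, and the fact that $L_{p}$ is the unique pure $(n-1)$-dimensional complex germ contained in $\{\mathcal{R}e(z_{n})=0\}$ (holomorphic functions with vanishing real part on a connected complex manifold are constant) gives $Q'_{p}=L_{p}$. You instead work invariantly, never flattening $M$: your polarization argument (the holomorphic function $G(t,s)=F_{\mathbb{C}}(\phi(t),\bar\phi(s))$ vanishes on $s=\bar t$, hence identically, hence at $s=0$) proves $L_{p}\subseteq Q_{p}$ in the original coordinates --- note that this step is exactly the paper's Remark~\ref{degenerate-remark}, stated there without proof, specialized to $V=L_{p}$ --- and you then use $\partial_{z}F_{\mathbb{C}}(p,\bar p)=\partial F(p)\neq 0$ (valid because $p\in M^{*}=\{F=0\}\setminus\{dF=0\}$ and $F$ is real-valued) to conclude that $(Q_{p},p)$ is a smooth, hence irreducible, hypersurface germ. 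Your approach buys two things: it sidesteps the point, glossed over in the paper, that the Segre variety defined via the original $F$ must be tracked through Cartan's coordinate change and through replacing the defining function by a unit times $\mathcal{R}e(z_{n})$; and it yields the stronger conclusion $(Q_{p},p)=L_{p}$, i.e.\ the Segre germ at a regular point is itself irreducible. The paper's approach buys brevity and directly produces the uniqueness statement (no other $(n-1)$-dimensional complex germ at $p$ lies in $M$), which your parenthetical alternative for the last step reproves via tangency to the Levi distribution. Both arguments rest on the same background fact that the leaf $L_{p}$ is a complex submanifold germ contained in $M$, which ultimately comes from Cartan's local description.
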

\begin{proof}
Since $p\in M^{*}$, E. Cartan's theorem assures that there exists a holomorphic coordinate system such that near of $p$, $M$ is given by $\{\mathcal{R}e(z_{n})=0\}$ and $p$ is the origin. In this coordinates system the foliation $\mathcal{L}$ is defined by $dz_{n}|_{M^{*}}=0$. In particular, $L_{0}=\{z_{n}=0\}$ and obviously $\{z_{n}=0\}$ is a branch of $Q_{0}$. Furthermore, $L_{0}$ is the unique germ of complex variety of pure dimension $n-1$ at $0$ which is contained in $M$. Hence $Q'_{0}=L_{0}$. 
\end{proof} 

\par Let $p\in\sing(M)$, we say that $p$ is a \textit{Segre degenerate singularity} if $Q_{p}$ has dimension $n$, that is, $Q_{p}=(\mathbb{C}^{n},p)$. Otherwise, we say that $p$ is a \textit{Segre nondegenerate singularity}. 
\par Suppose that $M$ is defined by $\{F=0\}$ in a neighborhood of $p$, observe that $p$ is a degenerate singularity of $M$ if $z\longmapsto F_{\mathbb{C}}(z,\bar{p})$ is identically zero. 
\begin{remark}\label{degenerate-remark}
 If $V$ is a germ of complex variety of dimension $n-1$ contained in $M$ then for $p\in V$, we have $(V,p)\subset (Q_{p},p)$. In particular, if there exists distinct  infinitely many complex varieties of dimension $n-1$ through $p\in M$ then $p$ is a Segre degenerate singularity.
\end{remark} 

\par To continuation, we consider a germ at $0\in\mathbb{C}^{n}$ of a codimension one
singular holomorphic foliation $\mathcal{F}$.
\begin{definition}
We say that $\mathcal{F}$ and $M$ are tangent, if the leaves of the Levi foliation $\mathcal{L}$ on $M$ are also
leaves of $\mathcal{F}$.
\end{definition}
\begin{definition}
A meromorphic (holomorphic) function $h$ is called a meromorphic (holomorphic) first integral for $\mathcal{F}$ if its indeterminacy (zeros)
set is contained in $\sing(\mathcal{F})$ and its level hypersurfaces contain the leaves of $\mathcal{F}$.
\end{definition}
\par Recently, Cerveau and Lins Neto proved the following result.
\begin{theorem}[Cerveau-Lins Neto \cite{alcides}]\label{lins-cerveau}
 Let $\mathcal{F}$ be a germ at $0\in\mathbb{C}^{n}$, $n\geq{3}$, of holomorphic codimension
one foliation tangent to a germ of an irreducible real
analytic hypersurface $M$. Then $\mathcal{F}$ has a non-constant meromorphic first integral.

\end{theorem}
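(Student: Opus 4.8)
The plan is to pass to the complexification and to exploit a symmetry that the tangency hypothesis forces: namely that $\mathcal{F}$ \emph{and} its conjugate are both tangent to $M$, which makes $M_{\mathbb{C}}$ saturated by products of leaves. Write $M=\{F=0\}$ and $M_{\mathbb{C}}=\{F_{\mathbb{C}}=0\}\subset(\mathbb{C}^{n}_{z}\times\mathbb{C}^{n}_{w},0)$, and recall that on $M_{\mathbb{C}}^{*}$ the complexified Levi foliation $\mathcal{L}_{\mathbb{C}}$ is cut out both by $\alpha=\partial_{z}F_{\mathbb{C}}$ and by $\beta=\partial_{w}F_{\mathbb{C}}$. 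If $\omega$ is a holomorphic $1$-form defining $\mathcal{F}$, the tangency of $\mathcal{F}$ to $M$ says that $\omega$ and $\partial F$ are proportional along $M^{*}$; complexifying this relation gives $\omega\wedge\alpha|_{M_{\mathbb{C}}^{*}}=0$, so the pull-back $\pi_{1}^{*}\mathcal{F}$ restricts to $\mathcal{L}_{\mathbb{C}}$. Since $F$ is real, the collection of Levi leaves is invariant under conjugation — each conjugate $\overline{V}$ of a Levi leaf is again an $(n-1)$-dimensional complex subvariety contained in $M$, hence a Levi leaf by Proposition \ref{leaf-proposition} — so the conjugate foliation $\overline{\mathcal{F}}$ is tangent to $M$ as well, and symmetrically $\pi_{2}^{*}\overline{\mathcal{F}}|_{M_{\mathbb{C}}^{*}}=\mathcal{L}_{\mathbb{C}}$.

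The key step is then a dimension count. Through a generic point $(z_{0},w_{0})\in M_{\mathbb{C}}^{*}$ the leaf of $\mathcal{L}_{\mathbb{C}}$ is contained both in $L^{\mathcal{F}}_{z_{0}}\times\mathbb{C}^{n}$ and in $\mathbb{C}^{n}\times L^{\overline{\mathcal{F}}}_{w_{0}}$, hence in $L^{\mathcal{F}}_{z_{0}}\times L^{\overline{\mathcal{F}}}_{w_{0}}$; as this leaf has complex dimension $2n-2$ and the product has the same dimension, the leaf is an open subset of the product, and by analytic continuation the entire product lies in $M_{\mathbb{C}}$. Thus $F_{\mathbb{C}}$ vanishes identically on $L^{\mathcal{F}}_{z_{0}}\times L^{\overline{\mathcal{F}}}_{w_{0}}$. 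Fixing $z^{*}$ in a leaf of $\mathcal{F}$, this means that the Segre variety $\{w:F_{\mathbb{C}}(z^{*},w)=0\}$ contains the matched $\overline{\mathcal{F}}$-leaves and, crucially, depends only on the $\mathcal{F}$-leaf of $z^{*}$ and not on $z^{*}$ itself. In other words the anti-holomorphic Segre assignment $z\mapsto\{F_{\mathbb{C}}(z,\cdot)=0\}$ is constant along the leaves of $\mathcal{F}$, i.e. it is a first integral of $\mathcal{F}$ valued in a space of hypersurface germs.

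It remains to upgrade this family-valued first integral to a non-constant \emph{meromorphic} function, and this is where I expect the real difficulty to lie. I would invoke Lemma \ref{closed-lemma}: the leaves of $\mathcal{L}_{\mathbb{C}}$ are closed in $M_{\mathbb{C}}^{*}$, so — arguing as in the proof of Lemma \ref{integral-theorem} through the holonomy of a generic leaf and the results of Mattei--Moussu and Loray \cite{mattei,Loray} — the transverse holonomy is of finite type and the local leaf space is one complex dimensional. This yields a first integral for $\mathcal{L}_{\mathbb{C}}$ which, read off through the projection $\pi_{1}$ and combined with the one-parameter nature of the Segre family above, descends to a first integral $h$ of $\mathcal{F}$ that is holomorphic off its indeterminacy locus; the hypothesis $n\geq3$ ensures that the exceptional sets have codimension at least two, so that $h$ extends meromorphically across $\sing(\mathcal{F})$ by a Riemann-type extension. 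The main obstacle is precisely this conversion: the Segre correspondence only provides a first integral valued in germs of hypersurfaces, and turning it into a scalar meromorphic function requires controlling the holonomy of $\mathcal{L}_{\mathbb{C}}$ and its behaviour near the singular locus, which is exactly where the closedness of the leaves and the dimension hypothesis carry the argument.
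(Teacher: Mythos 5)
First, a point of comparison: this theorem is not proved in the paper at all — it is quoted from Cerveau--Lins Neto \cite{alcides} and used as a black box (in the proof of Theorem \ref{line_complex}). So your proposal can only be measured against the cited proof, not an internal one. That said, the first half of your argument is correct and is the standard entry point: tangency complexifies to $\omega\wedge\partial_{z}F_{\mathbb{C}}|_{M^{*}_{\mathbb{C}}}=0$, the conjugate foliation $\overline{\mathcal{F}}$ is also tangent to $M$, the dimension count shows $M_{\mathbb{C}}$ is saturated by products $L^{\mathcal{F}}\times L^{\overline{\mathcal{F}}}$ of leaves, and hence the Segre varieties are constant along the leaves of $\mathcal{F}$.

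The genuine gap is exactly where you flag it, and the mechanism you propose for that step would in fact fail. Lemma \ref{integral-theorem} concludes that $\mathcal{L}_{\mathbb{C}}$ has a non-constant \emph{holomorphic} first integral, and its hypotheses are not consequences of Lemma \ref{closed-lemma}: closedness of leaves does not force finite holonomy (a hyperbolic germ $g(z)=2z$ has closed pseudo-orbits in a punctured disc, which is precisely why the paper must add, and then verify by explicit computation in Section 4, the hypothesis that every generator satisfies $g'(0)$ a root of unity), and the lemma also presupposes a blow-up in which $\tilde{C}=E\cap\tilde{M}_{\mathbb{C}}$ is invariant, which fails for dicritical tangent foliations. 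More decisively, \emph{no} argument that produces a holomorphic first integral of $\mathcal{L}_{\mathbb{C}}$ can prove this theorem. Take $M=\{z_{1}\bar{z}_{2}-\bar{z}_{1}z_{2}=0\}\subset\mathbb{C}^{3}$ (the quadric $Q_{2,4}$ of Table 2): it is an irreducible real-analytic hypersurface tangent to the foliation $z_{2}dz_{1}-z_{1}dz_{2}=0$, so it satisfies all the hypotheses, and the theorem holds with the meromorphic first integral $z_{1}/z_{2}$. But neither $\mathcal{F}$ nor $\mathcal{L}_{\mathbb{C}}$ admits any non-constant holomorphic first integral here, because every leaf closure contains $\{z_{1}=z_{2}=0\}$ (resp. $\{z_{1}=z_{2}=w_{1}=w_{2}=0\}$), so any holomorphic function constant on leaves is constant. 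Thus the holonomy route can only recover special nondicritical situations — which is exactly how it is used in Section 4 for $A_{\infty}$ — while the dicritical, genuinely meromorphic case is the heart of the Cerveau--Lins Neto theorem and is untouched by your proposal. Your Segre-valued ``first integral'' is a correct observation, but converting it into a scalar meromorphic function is the entire content of the theorem, and that conversion is missing.
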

\par In our context, we prove the following result.
\begin{theorem}\label{line_complex}
Let $M$ be a germ at $0\in\mathbb{C}^{n}$, $n\geq 3$ of an irreducible real-analytic Levi-flat hypersurfaces such that $\sing(M)=L:=\{z_{1}=z_{2}=\ldots=z_{n-1}=0\}$. Suppose that:
\begin{enumerate}
\item Every point in $\sing(M)$ is a Segre nondegenerate singularity.
\item The Levi-foliation $\mathcal{L}$ on $M^{*}$ extends to a holomorphic foliation $\mathcal{F}$ in some neighborhood of $M$.
\end{enumerate}
Then there exists $f\in\mathcal{O}_{n}$ and a real-analytic curve $\gamma\subset\mathbb{C}$ 
 such that $M=f^{-1}(\gamma)$. 
\end{theorem}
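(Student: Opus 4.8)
The plan is to build the holomorphic function $f$ out of a first integral of the extended foliation $\mathcal{F}$, and then to recognize $M$ as the preimage of the real curve traced out by $f$ on the leaf space. First I would invoke hypothesis (2): since $\mathcal{L}$ extends to a holomorphic codimension one foliation $\mathcal{F}$ tangent to $M$, and $n\geq 3$, Theorem \ref{lins-cerveau} applies and produces a non-constant meromorphic first integral $h$ of $\mathcal{F}$. Writing $h=P/Q$ with $P,Q\in\mathcal{O}_{n}$ relatively prime, the indeterminacy locus $I=\{P=Q=0\}$ is contained in $\sing(\mathcal{F})$ and has codimension at least two, while the members of the pencil $\{P-cQ=0\}$, $c\in\PP^{1}$, are hypersurfaces invariant by $\mathcal{F}$ that contain its leaves.

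Next I would exploit that $M^{*}$ is a union of leaves of $\mathcal{L}$, each of which is a leaf of $\mathcal{F}$ and hence lies in a single fiber of $h$. Thus $h$ is constant on the leaves of $\mathcal{L}$ and factors through the leaf space of $M^{*}$, which is real one-dimensional (the leaves have real dimension $2n-2$ inside the real $(2n-1)$-dimensional $M^{*}$). Since this factorization is real-analytic, the image $\gamma:=h(M^{*})$ is a real-analytic curve in $\PP^{1}$. It then remains to show that $h$ may be taken holomorphic near $0$, so that $f:=h\in\mathcal{O}_{n}$ and $\gamma\subset\mathbb{C}$.

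The main obstacle is precisely this last point, and here hypothesis (1) is decisive; I would argue by contradiction. Suppose the indeterminacy locus $I$ met the singular line $L=\sing(M)$ at a point $p$. Since $P(p)=Q(p)=0$, every member $\{P-cQ=0\}$ of the pencil passes through $p$; the leaves of $\mathcal{L}$ contained in $M$ and accumulating at $p$ are components of these members, and so would furnish infinitely many distinct germs of $(n-1)$-dimensional complex varieties through $p\in M$. By Remark \ref{degenerate-remark} this forces $p$ to be a Segre degenerate singularity, contradicting hypothesis (1). Hence $I\cap L=\emptyset$; as $0\in L$ we have $0\notin I$, so after composing with a M\"obius transformation (replacing $h$ by $1/h$ if $Q(0)=0$) we may assume $h$ is holomorphic in a neighborhood of $0$ and $\gamma\subset\mathbb{C}$. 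The delicate technical point, which is where the real work lies, is to verify rigorously that the leaves accumulating at $p\in L$ genuinely pass through $p$ as $(n-1)$-dimensional varieties, so that Remark \ref{degenerate-remark} is applicable; this requires a careful local analysis of the pencil near the indeterminacy set and is the step I expect to demand the most care.

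Finally I would close the argument dimensionally. By construction $f:=h$ is holomorphic, constant on the leaves of $\mathcal{L}$, and $\gamma=f(M^{*})$ is a real-analytic curve, so $M^{*}\subset f^{-1}(\gamma)$ and therefore $M\subset f^{-1}(\gamma)$. Both sides are real-analytic of real dimension $2n-1$, the fibers of $f$ coincide near $M^{*}$ with the leaves of $\mathcal{L}$ by Proposition \ref{leaf-proposition}, and $M$ is irreducible; comparing dimensions then yields $M=f^{-1}(\gamma)$, as desired.
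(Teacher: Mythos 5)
Your argument reproduces the paper's own proof in its first two thirds: Theorem \ref{lins-cerveau} supplies the meromorphic first integral, and indeterminacy points on $L$ are excluded exactly as in the paper, via Remark \ref{degenerate-remark} and hypothesis (1). One correction there: the point you flag as delicate is not where any difficulty lies. An indeterminacy point $p$ lies on every pencil member $V_{\zeta}=\{P-\zeta Q=0\}$ for the trivial reason that $P(p)=Q(p)=0$, so there is nothing to check about leaves ``genuinely passing through $p$''. What does need an argument is that infinitely many \emph{entire members} $V_{\zeta}$, not merely leaves, lie inside $M$: a leaf of $\mathcal{L}$ contained in $M$ is open in an irreducible component of some $V_{\zeta}$, and since $M$ is closed and real-analytic, the identity principle forces that whole component into $M$; as each $V_{\zeta}$ has finitely many components and $M^{*}$ carries uncountably many leaves, infinitely many $\zeta$ occur.

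The genuine gap is your final paragraph, which replaces the paper's citation of Brunella \cite[pg. 8]{meromorphic} (applied after normalizing $f$ to be an \emph{irreducible} germ) by a leaf-space and dimension-count argument; both steps fail without that normalization, and nothing in your construction rules out an imprimitive first integral. Concretely, in $\mathbb{C}^{3}$ take $M=\{\im(z_{1}z_{2})=0\}$, which satisfies all hypotheses with $\sing(M)=\{z_{1}=z_{2}=0\}=L$, the tangent foliation $d(z_{1}z_{2})=0$, and the holomorphic first integral $f=(z_{1}z_{2})^{2}$. Then $\gamma:=f(M^{*})=\mathbb{R}_{\geq 0}$, which is \emph{not} a real-analytic curve germ --- so the claim that the factorization through the leaf space makes the image real-analytic is false (images of real-analytic sets under holomorphic maps are in general only semianalytic, and here the image is a half-line) --- and moreover $f^{-1}(\gamma)=\{z_{1}z_{2}\in\mathbb{R}\}\cup\{z_{1}z_{2}\in i\mathbb{R}\}\supsetneq M$, even though $M\subset f^{-1}(\gamma)$, both sets have real dimension $2n-1$, and $M$ is irreducible. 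Hence ``comparing dimensions'' cannot yield $M=f^{-1}(\gamma)$: containment between sets of equal dimension does not give equality when the larger set has extra components, and irreducibility of the \emph{smaller} set is of no help. What makes the conclusion true is connectivity of the fibers of $f$, which is exactly what replacing $f$ by an irreducible (primitive) germ buys; Brunella's remark then gives the factorization of the defining function of $M$ as $G\circ f$, so that $\gamma=G^{-1}(0)$ is real-analytic and $M=f^{-1}(\gamma)$. You must either invoke that remark, as the paper does, or prove this factorization; the proposal does neither.
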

\begin{proof}
Since the Levi-foliation $\mathcal{L}$ on $M^{*}$ extends to a holomorphic foliation $\mathcal{F}$, we can apply directly Theorem \ref{lins-cerveau}, this means that $\mathcal{F}$ has a non-constant meromorphic first integral $f=g/h$, where $g$ and $h$ are relatively prime. We asserts that $f$ is holomorphic. In fact, if $f$ is purely meromorphic, we have that for all $\zeta\in\mathbb{C}$, the complex hypersurfaces $V_{\zeta}=\{g(z)-\zeta h(z)=0 \}$ contains leaves of $\mathcal{F}$. In particular, $M$ contains an infinitely many of hypersurfaces $V_{\zeta}$, because $M$ is closed and $\mathcal{F}$ is tangent to $M$. Set
$\Lambda:=\{\zeta\in\mathbb{C}: V_{\zeta}\subset M\}$.
Note also that the foliation $\mathcal{F}$ is singular at $L$, so that $\mathcal{I}_{f}:=\{h=g=0\}$ the indeterminacy set of $f$ intersect $L$.  Therefore, we have a point $q$ at $\mathcal{I}_{f}\cap L$ 
which  would be a Segre degenerate singularity, because $q\in V_{\zeta}$, for all $\zeta\in\Lambda$. It is a contradiction and the assertion is proved.
\par The foliation $\mathcal{F}$ is defined by $df=0$, $f\in\mathcal{O}_n$ and is tangent to $M$. Without loss generality, we can assume that $f$ is an irreducible germ in $\mathcal{O}_n$. According to a remark of Brunella  \cite[pg. 8]{meromorphic}, there exists a real-analytic curve $\gamma\subset\mathbb{C}$ through the origin such that $M=f^{-1}(\gamma)$.
\end{proof}
\begin{remark}
In \cite{singularlebl}, J. Lebl gave conditions for the Levi-foliation on $M^*$ does extended to a holomorphic foliation. 
One could be considered these hypothesis and establish a theorem more refined.
Note also that if $\sing(M)$ is a germ of smooth complex curve, it is possible adapted the proof of Theorem \ref{line_complex}. In general, 
the holomorphic extension problem for the Levi-foliation of a Levi-flat real-analytic hypersurface remains open and is of independent interest, for more details see \cite{generic}.
\end{remark}

\vskip 0.1 in

\noindent{\it\bf Acknowledgments.--}  
 This work was partially supported by PRPq - Universidade Federal de Minas Gerias UFMG 2013 and FAPEMIG APQ-00371-13. I would like to thank Maur\'icio Corr\^ea JR for his comments and suggestions, and the referee for pointing out corrections.

\end{document}